\def\makeautorefname#1#2{\expandafter\def\csname#1autorefname\endcsname{#2}}
\newcommand{\Arr}{\mathcal{A}}
\newcommand{\pres}{\mathcal{P}}
\newcommand{\arr}{\mathcal{A}}
\newcommand{\barr}{\mathcal{B}}
\newcommand{\decone}{\mathbf{d}}
\newcommand{\darr}{{\bf d}\mathcal{A}}
\newcommand{\RR}{\mathbb{R}}
\newcommand{\CC}{\mathbb{C}}
\newcommand{\CP}{\mathbb{CP}}
\newcommand{\ZZ}{\mathbb{Z}}
\newcommand{\KK}{{\mathbb K}}
\theoremstyle{plain}
\newtheorem{theorem}{Theorem}[section]
\newtheorem{corollary}{Corollary}[section]
\newtheorem{lemma}{Lemma}[section]
\theoremstyle{definition}
\newtheorem{example}{Example}[section]
\newtheorem{notation}{Notation}[section]
\newtheorem{remark}{Remark}[section]
\let\c@theorem=\c@thm
\let\c@lem=\c@thm
\let\c@lemma=\c@thm
\let\c@prop=\c@thm
\let\c@cor=\c@thm
\let\c@cong=\c@thm
\let\c@defn=\c@thm
\let\c@remark=\c@thm
\let\c@example=\c@thm
\let\c@note=\c@thm
\let\c@nte=\c@thm
\let\c@observe=\c@thm
\let\c@notation=\c@thm
 \tikzset{join/.code=\tikzset{after node path={%
 \ifx\tikzchainprevious\pgfutil@empty\else(\tikzchainprevious)%
 edge[every join]#1(\tikzchaincurrent)\fi}}}
 \tikzset{>=stealth',every on chain/.append style={join},
          every join/.style={->}}
\begin{document}

\title[Milnor fiber group homology]{The homology groups of the Milnor fiber associated to a central arrangement of hyperplanes in $\CC^3$ }
\author{Kristopher Williams}
\address{Department of Mathematics, Doane College, Crete, NE 68333, USA}
\email{kristopher.williams@doane.edu}
 
\subjclass[2000]{Primary 52C35, 32S55; Secondary 32Q55, 32S22, 57M05.}

\keywords{line arrangement, Milnor fiber, hyperplane arrangement}
 
  \begin{abstract} We use covering space theory and the fundamental group of complements of complexified-real line arrangements to explore the associated Milnor fiber. This work yields a combinatorially determined upper bound on the rank of the first homology group of the Milnor fiber. Under certain combinatorial conditions, we then show that one may determine the exact rank of the group and show that it is torsion free.
 \end{abstract} 
\maketitle

\section{Introduction}\label{rem:fiber}
Let $\arr$ be a an arrangement of $n$ hyperplanes in $\CC^{l+1}$. We may associate to the arrangement a reduced polynomial $Q(\arr) \in \CC\left[z_0,z_1,\dots,z_{l} \right]$, defined up to multiplication by a non-zero scalar, such that the kernel of the polynomial equals the union of the hyperplanes in $\arr$. If the arrangement is central, ie all hyperplanes contain the origin, then the polynomial defines a hypersurface singularity at the origin.

From the work of Milnor \cite{Milnor-SingularPoints-MR0239612}, we know that associated to any hypersurface singularity we may define a fibration with typical fiber called the Milnor fiber. In the case of a polynomial map $Q$ defining a central arrangement, the Milnor fiber $F$ is defined by $Q(z) = 1$ and we have a fiber bundle given by 
\begin{center}
\begin{tikzpicture}[start chain] {
    \node[on chain] {$F$} ;
    \node[on chain] {$M(\arr)$};
    \node[on chain, join={node[below]
          {\small $Q|_{M(\arr)}$}}] {$\CC^*$};
 }
\end{tikzpicture}\end{center}
where $M(\arr) = \CC^{l} \setminus Q^{-1}(0)$ is the complement of the arrangement and $Q|_{M(\arr)}$ is the restriction of the map defined by $Q$ to $M(\arr)$.

The Milnor fiber is also an $n$-fold cyclic covering space of the complement of the decone of the associated arrangement $M(\darr)$ (see Section \ref{ssec:mf-covering}). As the Milnor fiber is related to two different complements of arrangements, it is natural to ask what properties of arrangement complements extend to the Milnor fiber. It is known that the intersection lattice of the arrangement $L(\arr)$ determines the singular homology of the complement of an arrangement. However, it remains an open question as to whether $L(\arr)$ determines the homology of the Milnor fiber.

Further, the homology groups of the complement of an arrangement are known to be torsion free, but again it is an open question as to whether the homology groups of the Milnor fiber are torsion free. Cohen, Denham and Suciu \cite{CDS-torsion-MF-homology-MR1997327} have shown that torsion may arise if one works with multi-arrangements; however, the case of reduced arrangements remains open. 

In this note, we explore both the combinatorial determination of the homology as well as the question of torsion in the first homology group. We use the structure of the fundamental group of the complement of the arrangement to explore the topology of the associated Milnor fiber as a covering space. In Section \ref{sec:cover-group} we recall the presentation of the fundamental group as well as the relevant combinatorial covering space theory. Section \ref{sec:main} contains the main theorems concerning some cases where we show that the intersection lattice determines the homology groups and show that they are torsion free. In Section \ref{sec:example}, we give some examples and applications of the main theorems.


\section{Covering Spaces and Group Presentations}\label{sec:cover-group}

\subsection{Fundamental group of arrangement complements} $~$

 All presentations in this paper will be based on the Arvola-Randell presentations. For the convenience of the reader, we recall the algorithm for generating a presentation of a complexified-real arrangement in $\CC^2$ as given by Falk \cite{Falk-homotopy-types-MR1193601}. 
 
 Each line has a real defining form, therefore we may associate to the variety of the arrangement $\arr$ a graph $G(\arr)$ in the real plane. The graph consists of a vertex for each point where at least two hyperplanes intersect, and edges that lie on the hyperplanes. We note that edges may be rays. Give each edge a unique label, and let these be the generators in the presentation $\pres(\arr)$ of the fundamental group. Each intersection of lines in the arrangement locally has the form shown in Figure \ref{fig:fundgrp-local-intersetion}.

\begin{figure}[ht]
	\begin{center}
		\begin{tikzpicture}[y=0.80pt, x=0.8pt,yscale=-1, inner sep=0pt, outer sep=0pt,scale=0.5]
  \begin{scope}[shift={(-91.739216,-394.62531)}]
    \path[draw=black,line join=miter,line cap=butt,line width=0.800pt]
      (467.8234,595.9331) -- (306.0897,978.3565);
    \path[draw=black,line join=miter,line cap=butt,line width=0.800pt]
      (233.6108,940.8680) -- (544.6500,626.8999);
    \path[draw=black,line join=miter,line cap=butt,line width=0.758pt]
      (252.9793,611.6631) -- (529.7947,976.6918);
    \path[draw=black,line join=miter,line cap=butt,line width=1.022pt]
      (206.1932,695.9605) -- (589.7011,890.6813);
    \path[draw=black,line join=miter,line cap=butt,line width=0.893pt]
      (196.3302,755.1890) -- (608.7323,826.7209);
    \path[fill=black] (538.7113,995.00726) node[above right] (text3667) {$a_1
      $};
    \path[fill=black] (601.9552,899.18323) node[above right] (text3671) {$a_2
      $};
    \path[shift={(0,552.36218)},fill=black] (532.6087,45.652176) node[above right]
      (text3675) {$a_{m-1}       $};
    \path[fill=black] (628.30768,829.11548) node[above right] (text3679) {$a_3
      $};
    \path[fill=black] (503.18408,751.3454) node[above right] (text3683) {$...
      $};
    \path[fill=black] (425.46674,559.28961) node[above right] (text3675-5) {$a_{m}
      $};
    \path[fill=black] (210.93723,591.38824) node[above right] (text2970) {$a'_1
      $};
    \path[shift={(121.03364,415.83852)},fill=black] (86.873116,366.4686) node[above
      right] (text2974) {       };
    \path[fill=black] (153.69006,695.29108) node[above right] (text2970-3) {$a'_2
      $};
    \path[fill=black] (141.56824,779.13373) node[above right] (text2970-0) {$a'_3
      $};
    \path[fill=black] (122.37534,945.80896) node[above right] (text2970-06)
      {$a'_{m-1}     $};
  \end{scope}
  \path[fill=black] (113.46863,420.87387) node[above right] (text2970-6) {...
    };
  \path[fill=black] (159.93565,604.72168) node[above right] (text2970-9) {$a'_m$
    };

\end{tikzpicture}
		\caption{Local depiction of an intersection of lines in $\CC^2$}
		\label{fig:fundgrp-local-intersetion}
	\end{center}
\end{figure}
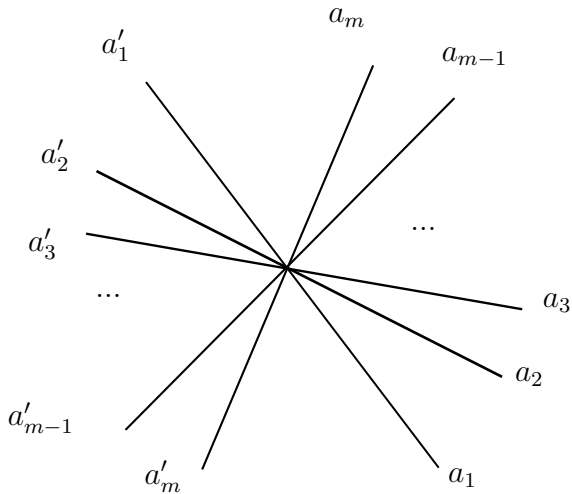

At each vertex of $G(\arr)$, we introduce the following conjugation relators,
\begin{equation*}
\begin{array}{c}
a_1'a_1^{-1} \vspace*{5pt} \\
a_2'(a_2^{a_1})^{-1} \vspace*{5pt} \\
a_3'(a_3^{a_2a_1})^{-1} \vspace*{5pt} \\
\vdots \vspace*{5pt} \\
a_{m-1}'(a_{m-1}^{a_{m-2}\cdots a_1})^{-1} \vspace*{5pt} \\
a_m'a_m^{-1} \end{array}
\end{equation*}
where $a^b = b^{-1} a b$. We also introduce the commutation relations,
\begin{equation*}
\begin{array}{c}
\left[a_m,a_{m-1}a_{m-2} \cdots a_2 a_1 \right] \\
\left[a_ma_{m-1},a_{m-2} \cdots a_2 a_1\right] \\
\vdots \\
\left[a_ma_{m-1}a_{m-2} \cdots, a_2 a_1\right] \\
\left[a_ma_{m-1}a_{m-2} \cdots a_2, a_1\right] 
\end{array}
\end{equation*}
which may be economically written as
\begin{equation*}
[a_m, a_{m-1}, \cdots, a_1].
\end{equation*}

The presentation $\pres(\arr)$ with generators described above and the relations induced by each point of intersection is called the Arvola-Randell presentation and is a presentation for $\pi_1(M(\arr))$. One may see from the conjugation relators that the number of generators in $\pres(\arr)$ can be reduced. One may reduce the number of generators by using Tietze transformations after determining the presentation. However, it is easier in practice to reduce the number of generators while constructing the presentation as follows.

 Note that we associate to each hyperplane in the arrangement two distinct rays in the graph $G(\arr)$. By choosing an arbitrary system of coordinates for $\RR^2$, we may pick a distinguished ray as follows. For each hyperplane $H \in \arr$, let $\rho_1$ and $\rho_2$ be the rays associated to the hyperplane. Let $\rho$ be the ray with the greatest $x$-coordinates, or if these are all equal, then the ray with the largest $y$-coordinate, and let $\gamma_H$ be the corresponding label. Each generator $\gamma_H$ may be identified with the homotopy class of a meridional loop around the hyperplane $H$ chosen compatibly with respect to the other generators around each hyperplane in the arrangement. By proceeding away from the ray along the line $H$, we may use the conjugation relators that involve $\gamma_H$ to express the generators coming from the other edges in terms of $\{\gamma_H\}_{H \in \arr}$.
 
 Continuing this procedure for each hyperplane in the arrangement, we arrive at a presentation with one generator for each hyperplane and $m-1$ relators for each point of multiplicity $m$ in the arrangement. 

We will denote group presentations by $\pres$ and the group associated to the presentation by $G(\pres)$.

\begin{remark}
There are several other approaches to finding presentations for fundamental groups of arrangement complements. See the work of Cohen and Suciu \cite{CS-BraidMonodromy-MR1470093} and Yoshinaga \cite{Yoshinga-lefschetz-arrang-newprent-MR2343416}.
\end{remark}

\subsection{CW-complex associated to a group presentation}\label{ssec:cw-group} $~$

Recall that the standard CW-complex associated to a (finite) presentation $P$ has one 0-cell, one 1-cell for each generator (with both ends attached to the 0-cell), and one 2-cell for each relator (with boundary attached by following along the 1-cell associated to each generator with respect to orientation). We will work with these complexes and give an example in Example \ref{ex:torus}. More details may be found in Magnus, Karass and Solitar \cite{MKS}.

\begin{example}
\label{ex:torus}
Let $\pres = \langle g_1, g_2 : 1 = g_1 g_2 g_1^{-1} g_2^{-1} \rangle$ and let $C(\pres)$ be the associated CW-complex coming from the presentation. The 1-skeleton of $C(\pres)$ is homeomorphic to $S^1 \vee S^1$. Attach a 2-cell along $g_1$ with positive orientation, $g_2$ with positive orientation, $g_1$ with negative orientation, and finally $g_2$ with negative orientation. The result is a CW-complex homeomorphic to a torus. Denote by  $C(\pres)$ the CW-complex induced by a presentation.
\end{example}
 
\subsection{The Milnor fiber as a covering space}\label{ssec:mf-covering} $~$

Let $\arr$ be a central arrangement of $n$ hyperplanes in $\CC^{l+1}$ with complement $M(\arr)$. Recall the Hopf map $h: \CC^{l+1} \setminus \{\vec{0}\} \rightarrow \CP^l$ which identifies any $z \in \CC^{l+1}\setminus \{\vec{0}\}$ with $\gamma z$ for all $\gamma \in \CC^*$. It is well known that the restriction of the Hopf map to the complement of a central arrangement is the projection map of a fiber bundle with base space the complement of the decone of the arrangement and fiber homeomorphic to $\CC^*$.

Combining the fiber bundle induced by the Hopf map with the Milnor fiber construction in Remark \ref{rem:fiber}, one may construct the commutative diagram in Figure \ref{fig:MF}.

As $f$ is homogeneous we may define the geometric monodromy $g \colon F \rightarrow F$ by $g(x_0, \dots, x_n) = (\lambda x_0, \dots, \lambda x_n)$ where $\lambda = e^\frac{2\pi i}{n}$. We note that $g$ generates a cyclic group with order $n$ acting freely on $F$. By restricting the Hopf bundle $h_F=h\vert_F \colon F \rightarrow M(\decone \arr )$, we obtain the orbit map of the free action of the geometric monodromy $g$. That is, $(h_F, M(\decone \arr))$ forms a $\ZZ_n$-bundle over $F$, hence $F/ \ZZ_n $ is homeomorphic to  $M(\decone \arr)$.

\begin{figure}[ht]
\begin{center}
\begin{tikzpicture}[scale=0.7]
 \matrix (m) [matrix of math nodes, row sep=3em,
 column sep=3em]
 { & F & F / \ZZ_n \\
 \CC^* & M(\Arr ) & M(d \Arr ) \\
 & \CC^* \\ 
 };
 \path[->] (m-1-2) edge node[above]{$p$} (m-1-3);
 \path[->] (m-1-3) edge node[right]{$\cong$} (m-2-3);

 \path[right hook->] (m-2-1) edge (m-2-2);
 \path[->>] (m-2-2) edge node[below]{$h|_{M(\Arr )}$} (m-2-3);
 \path[right hook->] (m-1-2) edge (m-2-2);
 \path[->>] (m-2-2) edge node[left] {$Q(\Arr )|_{M( \Arr )}$} (m-3-2);
 \end{tikzpicture}
\caption{Commutative diagram associated to the Milnor fiber. } \label{fig:MF}
\end{center} 
\end{figure}

The induced map $h \vert_F$ is a covering map of order $n$ induced by the map $\phi\colon \pi_1(M(\decone \arr)) \rightarrow \ZZ_n$ which sends a meridional generator to $1 \in \ZZ_n$ \cite{CS-MilnorFibers-MR1310725}, \cite{Dimca-Singularities-MR1194180}. The homotopy sequences of the fibrations fit together into the commutative diagram of Figure \ref{fig:comm-homotopy}.

\begin{figure}[ht]
\begin{center}
\begin{tikzpicture}[scale=0.7]
 \matrix (m) [matrix of math nodes, row sep=3em,
 column sep=3em]
 { & \pi_1(F) & \pi_1(F) \\
 \pi_1(\CC^*) & \pi_1(M(\Arr )) & \pi_1\left(M(d \Arr )\right) \\
 & \ZZ & \ZZ_n \\ 
 };
 \path[->] (m-1-3) edge node[right]{$p_{\#}$} (m-2-3);
 \path[->] (m-2-3) edge node[right]{$\phi$} (m-3-3);
 \draw[-] (m-1-2) edge[double=white,double distance = 3pt] (m-1-3);
 \path[->] (m-2-1) edge (m-2-2);
 \path[->] (m-2-2) edge node[below]{$(h|_{M(\Arr )})_{\#}$} (m-2-3);
 \path[->] (m-1-2) edge (m-2-2);
 \path[->] (m-2-2) edge node[left] {$(Q(\Arr )|_{M( \Arr )})_{\#}$} (m-3-2);
 \end{tikzpicture}
\caption{Homotopy groups associated to the Milnor fiber. } \label{fig:comm-homotopy}\end{center} \end{figure}

Cohen and Suciu used this construction to explore the eigenspaces of $H^*(F; \CC)$ induced by the monodromy map $q\colon F \to F$, $q(z) = \exp(2\pi i / (n+1)) \cdot z$. One result we will use is the following:

\begin{theorem}{\rm \cite[Remark 1.7]{CS-MilnorFibers-MR1310725}}
\label{thm:minimal-betti-mf} Let $\arr$ be a central arrangement of $n$ hyperplanes in $\CC^{l+1}$. If $F$ is the Milnor fiber associated to $\arr$, then the first betti number of $F$ is at least $n$.
\end{theorem}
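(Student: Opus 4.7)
The plan is to derive the lower bound from the Milnor fibration $F \hookrightarrow M(\arr) \xrightarrow{Q|_{M(\arr)}} \CC^*$ appearing in Figure~\ref{fig:MF}. Because $\CC^*$ is homotopy equivalent to $S^1$, this is a fibration over a circle, and the geometric monodromy $t_* \colon H_*(F) \to H_*(F)$ induced by $g$ fits into the associated Wang long exact sequence
$$\cdots \to H_k(F) \xrightarrow{t_*-\mathrm{id}} H_k(F) \to H_k(M(\arr)) \to H_{k-1}(F) \xrightarrow{t_*-\mathrm{id}} H_{k-1}(F) \to \cdots.$$

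I would first collect two standard inputs. The first homology of an arrangement complement is free abelian of rank equal to the number of hyperplanes, so $H_1(M(\arr);\ZZ) \cong \ZZ^n$, with basis the meridian classes. Because $Q$ vanishes to first order along each hyperplane, the induced map $Q_* \colon H_1(M(\arr)) \to H_1(\CC^*) = \ZZ$ sends each meridian to $1$, hence is surjective with kernel free abelian of rank $n-1$. Since the Milnor fiber of a reduced central arrangement is connected, $H_0(F) = \ZZ$ and $t_*$ acts trivially there; the tail of the Wang sequence then becomes
$$H_1(F) \xrightarrow{t_*-\mathrm{id}} H_1(F) \to H_1(M(\arr)) \xrightarrow{Q_*} \ZZ \to 0,$$
identifying $\operatorname{coker}(t_*-\mathrm{id} \colon H_1(F) \to H_1(F))$ with $\ker Q_* \cong \ZZ^{n-1}$. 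As this cokernel is a quotient of $H_1(F)$, the first Betti number $b_1(F)$ is already at least $n-1$.

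The main subtleties in this step are identifying the Wang connecting homomorphism explicitly with $Q_*$ (a routine naturality check for the Leray--Serre spectral sequence over $S^1$) and verifying the connectedness of $F$ (which uses reducedness of $\arr$ through the gcd of the vanishing orders of the factors of $Q$). To close the remaining gap to the stated bound, I would invoke the free $\ZZ_n$-cover $F \to M(\darr)$ from Section \ref{ssec:mf-covering}: by the transfer argument applied to this finite free action, $H_1(F;\mathbb{Q})^{\ZZ_n} \cong H_1(M(\darr);\mathbb{Q}) \cong \mathbb{Q}^{n-1}$, providing a monodromy-invariant contribution of rank $n-1$; combining this with the coinvariant computation above and checking that the two contributions are genuinely additive (no hidden overlap between the invariant and coinvariant summands of the $\ZZ_n$-representation on $H_1(F;\mathbb{Q})$) then yields the asserted lower bound. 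I expect this last compatibility verification, rather than either of the two individual rank estimates, to be the real work.
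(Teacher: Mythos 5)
Your first step is sound, and it is worth saying that the paper itself offers no proof of Theorem \ref{thm:minimal-betti-mf} (it is quoted from Cohen--Suciu), so your argument stands on its own: the Wang sequence of the Milnor fibration, together with $H_1(M(\arr);\ZZ)\cong\ZZ^n$, the identification of the connecting map with $Q_*$, and the connectedness of $F$, correctly yields $\operatorname{coker}\bigl(t_*-\mathrm{id}\colon H_1(F)\to H_1(F)\bigr)\cong\ker Q_*\cong\ZZ^{n-1}$ and hence $b_1(F)\geq n-1$. The second step, however, cannot be made to work. Since $t_*$ has finite order $n$, the $\ZZ_n$-representation $H_1(F;\mathbb{Q})$ is semisimple, so invariants and coinvariants are canonically isomorphic: both are (isomorphic to) the eigenvalue-$1$ summand, and $(t_*-\mathrm{id})$ is invertible on the complementary nontrivial-eigenvalue part. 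Consequently the transfer isomorphism $H_1(F;\mathbb{Q})^{\ZZ_n}\cong H_1(M(\darr);\mathbb{Q})\cong\mathbb{Q}^{n-1}$ and your Wang cokernel are the \emph{same} $(n-1)$-dimensional contribution, not two contributions to be added; the ``compatibility verification'' you defer is precisely where the argument collapses, since the overlap is total and neither computation sees the nontrivial-eigenvalue part, which is exactly where any surplus over $n-1$ would have to live.

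Moreover, no repair is possible, because the bound as literally stated is false. For $\arr$ defined by $Q=xyz$ in $\CC^3$ (so $n=3$), the Milnor fiber $\{xyz=1\}$ is isomorphic to $(\CC^*)^2$ via $(x,y)\mapsto\bigl(x,y,(xy)^{-1}\bigr)$, so $b_1(F)=2=n-1$; similarly $Q=xy$ in $\CC^2$ gives $F\cong\CC^*$ and $b_1(F)=1<2$, and generic central arrangements in $\CC^3$ have $b_1(F)=n-1$. (A connected $3$-fold cyclic cover of the torus is again a torus, so the count in Example \ref{ex:covering} should likewise read $\ZZ^2$.) The correct form of the cited result---and the form the paper actually invokes later, e.g. $b_1(F,\CC)\geq b_1(M(\darr))=n-1$ in the proof of Corollary \ref{Combo} and $b_1(F,\KK)\geq|\arr|-1$ in the proof of Theorem \ref{thm:one-rel-prime}---is $b_1(F)\geq n-1$, which your Wang-sequence argument (or the transfer argument alone) does prove and which suffices for all subsequent uses; you should prove that statement and abandon the attempted extra $+1$.
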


We now restrict our attention to the case of arrangements in $\CC^3$. Consider the exact sequence $$0 \to \pi_1(F) \to \pi_1(M(\darr)) \to_\phi \ZZ_n \to 0$$
From the work of Section \ref{ssec:cw-group}, we construct a CW-complex $C$ from the presentation of $\pi_1(M(\darr))$. 

Next, we construct a CW-complex $K$ such that $\pi_1(K) \cong \pi_1(F)$. Lyndon and Schupp describe the process   explicitly in \cite{Lyndon-Schupp-Comb-Group-Theory-MR1812024} Proposition 3.4, p. 119 and we recall it in Example \ref{ex:covering}. Finally, we determine $H_1(K,\ZZ)$ and conclude that $H_1(K,\ZZ)$ is isomorphic to $H_1(F,\ZZ)$. The following lemma follows easily from the preceding statements.

\begin{lemma}\label{lem:construction}
Using the presentation for $\pi_1(M(\arr))$ and the map $\phi \colon \pi_1(M(\arr)) \to \ZZ_n$ we may construct a CW-complex $K$ such that $H_1(K, \ZZ) \cong H_1(F,\ZZ)$ where $F $ is the Milnor fiber associated to the arrangement $\arr$.
\end{lemma}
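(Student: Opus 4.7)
The plan is to realise $F$ (up to its first two skeleta) as a combinatorial covering space of an explicit $2$-complex, and then read off $H_1(F,\ZZ)$ from the covering space. Starting from the Arvola-Randell presentation $\pres$ of $\pi_1(M(\darr))$ recalled in Section~\ref{sec:cover-group}, I would form the standard 2-dimensional CW-complex $C(\pres)$ of Section~\ref{ssec:cw-group}, which has $\pi_1(C(\pres)) \cong \pi_1(M(\darr))$. By the classical Galois correspondence between subgroups and connected covering spaces, there is a connected $n$-sheeted cover of $C(\pres)$ corresponding to the subgroup $\ker(\phi) \le \pi_1(C(\pres))$; this cover will be the desired complex $K$.

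To build $K$ explicitly, I would follow the recipe of Lyndon and Schupp. Take one $0$-cell $v_i$ for each $i \in \ZZ_n$; for every generator $g$ of $\pres$ and every $i \in \ZZ_n$, attach a $1$-cell running from $v_i$ to $v_{i+\phi(g)}$; and for every relator $r$ of $\pres$ and every $i \in \ZZ_n$, attach a $2$-cell by reading $r$ off as a loop starting at $v_i$. The attachment of $2$-cells is unambiguous precisely because every Arvola-Randell relator (of conjugation or commutation type) has total exponent sum zero, so its image under $\phi$ is $0 \in \ZZ_n$ and the lifted edge path closes up at the vertex where it started. A routine check then confirms that the cellular map $K \to C(\pres)$ collapsing each $v_i$ to the unique $0$-cell of $C(\pres)$ is an $n$-fold covering whose associated subgroup of $\pi_1(C(\pres))$ is exactly $\ker(\phi)$.

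Combining this construction with the exact sequence
$$0 \to \pi_1(F) \to \pi_1(M(\darr)) \xrightarrow{\phi} \ZZ_n \to 0$$
identifies $\pi_1(K) \cong \ker(\phi) \cong \pi_1(F)$. Passing to abelianisations (equivalently, applying the Hurewicz theorem in degree one) yields
$$H_1(K,\ZZ) \cong H_1(F,\ZZ),$$
as desired.

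The main obstacle is bookkeeping rather than conceptual: one has to verify that each Arvola-Randell relator really does lie in $\ker(\phi)$ so that the construction of $K$ is well defined, and one has to confirm that the natural projection $K \to C(\pres)$ is the covering associated with $\ker(\phi)$. Both checks reduce to the facts that $\phi$ sends every meridional generator to $1 \in \ZZ_n$ and that each Arvola-Randell relator has total exponent sum zero.
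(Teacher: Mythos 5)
Your proposal is correct and takes essentially the same route as the paper: form the presentation complex $C(\pres)$, build the $n$-fold cyclic cover $K$ corresponding to $\ker\phi$ via the Lyndon--Schupp construction, identify $\pi_1(K)\cong\pi_1(F)$ through the exact sequence $0\to\pi_1(F)\to\pi_1(M(\darr))\to\ZZ_n\to 0$, and pass to abelianizations. The additional verification you spell out (each Arvola--Randell relator has exponent sum zero, hence maps to $0$ under $\phi$ and its lifted attaching path closes up) is precisely the routine check the paper leaves implicit in citing Lyndon and Schupp.
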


\begin{example}\label{ex:covering}
Let $\arr$ be the central arrangement in $\CC^3$ defined by $Q(x,y,z)=xyz$. Then $\pi_1(M(\darr))$ has an Arvola-Randell presentation given by 
\begin{equation}
\pres : = \langle  g_1, g_2 : 1 = g_1 g_2 g_1^{-1} g_2^{-1} \rangle,
\end{equation}
where each $g_i$ corresponds to a meridional loop around a hyperplane in $M(\darr)$. The map $\phi \colon \pi_1(M(\darr)) \to \ZZ_3$ induces the covering map associated to the Milnor fiber $F$. For ease of notation, we will use the cyclic group of order three generated by $x$, $\langle x \vert x^3=1\rangle$.

We construct the CW-complex $K$ as follows. Start by denoting three 0-cells by $\{v, xv, x^2v\}$. We then attach three 1-cells to these 0-cells for each generator in $\pres$. Denote these by $\{x^j g_1, x^j g_2\}_{j=0}^2$. Orient each 1-cell $x^jg_i$ in the positive direction from endpoint $x^j v$ to endpoint $x^{j+1}$. Attach three 2-cells, one beginning at each 0-cell. The first has oriented boundary $g_1 + xg_2 - xg_1 - g_2$; the rest follow the same pattern starting at the next vertex. 
In this case, $K$ is homotopy equivalent to the 3-fold cyclic cover of a torus. Therefore, $H_1(F,\ZZ) \cong H_1(K,\ZZ) \cong \ZZ^3$.
\end{example}

\begin{notation}\label{not:tuple}
Rather than writing each of the boundaries as a sum, we use tuple notation. More explicitly, the tuple \begin{equation*}
(g_1- g_2, g_2 - g_1,0)
\end{equation*}
will be the notation for the oriented boundary $g_1 + xg_2 - xg_1 - g_2$. The index in the tuple is one more than the exponent on the ``$x$'' coefficient of the lift. This allows us to define an action of $\ZZ_n = \langle x: x^n=1 \rangle$ on the tuples. The coefficient on $x$ indicates the number of positions to move right in the tuple, and whenever an element runs out of positions in the tuple, it moves to the beginning of the tuple. For example, $x^2.(a,b,c,0)$ is equal to $(c,0,a,b)$ which may be written as the sum $c + x^2a + x^3b$.
\end{notation}

\section{Main Theorems}\label{sec:main}

\subsection{Single Point of Intersection}$~$

\label{ssec:one-point-intersection}
In this section, we closely examine an arrangement that consists of a pencil of lines, ie all lines intersect in a single point. This local calculation forms the basis for many of the arguments presented in the subsequent sections.

Explicitly, let $\darr$ be the arrangement defined by the polynomial $Q(\darr)= \left(y-x\right)\left(y-2x\right) \cdots \left(y-(n-1)x\right)$. Let $g_k$ be the homotopy class of a meridional generator around the line defined by $\left(y-kx\right)$. The Arvola-Randell presentation for $\pi_1(M(\darr))$ is given by 
\begin{equation}
\label{eqn:pres-pencil}
\mathcal{P} = \langle g_1, \cdots, g_n \colon [g_{1},\cdots,g_{n-1}] \rangle
\end{equation}

In Section \ref{ssec:mf-covering}, we defined the map $\phi \colon \pi_1(M(\darr)) \to \ZZ_n$ induced by sending $g_k$ to $1 \in \ZZ_n$. We will now consider a more general situation. Let $\phi_m \colon \pi_1(M(\darr)) \to \ZZ_m$ be the map induced by sending $g_k$ to $1 \in \ZZ_m$. Using this map, one may construct a CW-complex $K(\mathcal{P},\phi_m)$ that has the homotopy type of an $m$-fold cyclic covering of $C(\mathcal{P})$, the CW-complex induced from the presentation $\mathcal{P}$.

We begin by explicitly examining $C(\mathcal{P})$. The relations given by $[g_1,\dots,g_n]$ are
\begin{equation*}
g_{n-1} \cdots g_2 g_1 = g_{n-2}g_{n-3} \cdots g_2 g_1 g_{n-1} = \cdots = g_1 g_{n-1} \cdots g_2
\end{equation*}

We are able to rewrite these relations as the following relators:
\begin{align*}
R_1 &:=g_{n-1}  g_{n-2} \cdots g_2 g_1 g_{n-1} ^{-1} g_1^{-1} g_2 ^{-1} \dots g_{n-2}^{-1} \\
R_2 &:=g_{n-2}g_{n-3} \cdots g_2 g_1 g_{n-1}  g_{n-2}^{-1} g_{n-1} ^{-1} g_1^{-1} \dots g_{n-3}^{-1} \\
\cdots &\cdots \\
R_{n-2} &:=g_2 g_1 g_{n-1} \cdots g_3 g_2^{-1} g_3^{-1} \dots g_{n-1} ^{-1}g_1^{-1}
\end{align*}

These are the boundaries of 2-cells in $C(\pres)$, thus we lift them to boundaries of 2-cells in $K(\pres,\phi_m)$ (using tuple notation, see Notation \ref{not:tuple}) as
\begin{align*}
x^i .\Tilde{R_1} &:= x^i . (g_{n-1} - g_{n-2}, g_{n-2} - g_{n-3}, \dots, g_2 - g_{1}, g_{1} - g_{n-1}, 0,\dots,0) \\
x^i .\Tilde{R_2} &:= x^i . (g_{n-2} - g_{n-3}, g_{n-3} - g_{n-4}, \dots, g_1 - g_{n-1}, g_{n-1} - g_{n-2}, 0,\dots,0) \\
\cdots& \cdots \\
x^i .\Tilde{R}_{n-2} &:= x^i . (g_2 - g_{1}, g_{1} - g_{n-1}, \dots, g_3 - g_{2}, 0,\dots,0)
\end{align*}
for $0 \leq i \leq m-1$ and denote this set by $\Tilde{R}$. Note that each tuple has $m$ positions.

  We use parenthesis to denote abelian presentations and can see that 
\begin{equation*}
H_1(K(\pres,\phi_m), \ZZ) = \left( \begin{array}{l|r}
	 (g_1,\dots,g_1) &  \Tilde{R} \\
	 x^i(g_{j+1} - g_j,0,\dots,0), 1\leq j \leq n-2, 0 \leq i \leq m-1 ~& 
	\end{array} \right)
\end{equation*}

We will use Tietze transformations to determine a more useful presentation of this group. We have $\{x^i. \Tilde{R}_j\}_{i,j}$ as the generating set for the relators. We now replace $x^i. \Tilde{R}_1$ by $$x^i. \Tilde{R}_1^* = x^i .\Tilde{R}_1 - x^{i+1} .\Tilde{R}_2$$ for $0 \leq i \leq m-1$. Continue replacing $x^i .\Tilde{R}_j$ by $$x^i. \Tilde{R}_j^* = x^i. \Tilde{R}_j - x^{i+1}. \Tilde{R}_{j+1}$$ for $0 \leq i \leq m-1$ and $1 \leq j \leq n-3$.

Thus we end up with
\begin{align*}
 x^i .\Tilde{R}_1^* &:= x^i .( g_{n-1}-g_{n-2},0,\dots,0, - g_{n-1} + g_{n-2},0\dots,0 ) \\
 x^i .\Tilde{R}_2^*&:= x^i .( g_{n-2}-g_{n-3},0,\dots,0, - g_{n-2} + g_{n-3},0\dots,0 ) \\
 \cdots & \cdots \\
 x^i .\Tilde{R}_{n-3}^*&:=x^i .( g_{3}-g_{2},0,\dots,0, - g_{3} + g_{2},0\dots,0 ) \\
x^i . \Tilde{R}_{n-2} &:= x^i . (g_2 - g_{1}, g_{1} - g_{n-1}, \dots, g_3 - g_{2}, 0 , \dots, 0)
\end{align*}
with second non-zero entry in the $n$-th place in the tuple.

Let us now inspect the set of relators $x^i. \Tilde{R}_1^*$. Expanding with respect to $i$, we may see that this set of relators is equivalent (by simple linear algebra) to
\begin{equation*}
x^i. (g_{n-1} - g_{n-2}, 0 , \dots, 0, -(g_{n-1} - g_{n-2}), 0, \dots, 0)
\end{equation*}
where the second non-zero entry is in the $(w+1)$-th position of the tuple, where 
\begin{equation*}
w = \begin{cases}
1 & \text{ if } m=2 \\
\gcd(m,n-1) & \text{ otherwise}
\end{cases} .
\end{equation*} Let us denote this new set of expressions by $x^i \Tilde{R}_1^{**}$. We have a similar result for each set of expressions $x^i \Tilde{R}_j^*$, so call these new relators $x^i \Tilde{R}_j^{**}$. 

To summarize, we have the set of relators
\begin{equation}
\label{eqn:gcd-relators}
 \left\lbrace
\begin{array}{rl|r}
x^i. \Tilde{R}_j^{**} &:= x^i .( g_{n-j}-g_{n-j-1},0,\dots,0, - (g_{n-j} - g_{n-j-1}),0,\dots,0,) & 0 \leq i \leq m-1\\
x^i . \Tilde{R}_{n-2} &:= x^i . (g_2 - g_{1}, g_{1} - g_{n-1}, \dots, g_3 - g_{2}, 0 , \dots, 0) & 1 \leq j \leq n-3
\end{array}
 \right\rbrace
\end{equation}
where the second non-zero entry is in the position described in the previous paragraph.

\begin{remark}
\label{rem:rel-prime-relators-reduced}
Suppose now that $\gcd(m,n-1)=w=1$. This is exactly the case for a pencil of lines. We now have the set of relators
\begin{equation}
\label{eqn:rel-prime-relators}
\left\lbrace
\begin{array}{rl|r}
x^i.\Tilde{R}_j^{**}&:= x^i . (g_{n-j}-g_{n-j-1}, -(g_{n-j}-g_{n-j-1}), 0 \dots, 0) & 0 \leq i \leq m-1\\
x^i . \Tilde{R}_{n-2}&:= x^i . (g_2 - g_{1}, g_{1} - g_{n-1}, \dots, g_3 - g_{2},0,\dots,0) & 1 \leq j \leq n-3 
\end{array}
 \right\rbrace
\end{equation}

Let us now examine $x^i. \Tilde{R}_{n-2}$. We rewrite the set as
\begin{equation*}
x^i . \left(g_2 - g_{1}, \sum_{k=2}^{n-1} (-1)(g_{k} - g_{k-1}),g_{n-1} - g_{n-2} \dots, g_3 - g_{2}, ,0,\dots,0\right)
\end{equation*}
By using the relators $x^i . \Tilde{R}_j^{**}$, we have cancellation of most terms and have the new relator $x^i . \Tilde{R}_{n-1}^{**}$:
\begin{equation*}
x^i . (g_2 - g_1, g_1 - g_2, 0 , \dots, 0)
\end{equation*}
Therefore we conclude that $\Tilde{R}$ is equivalent to the set of relators below.
\begin{equation}
\label{eqn:rel-prime-relators-reduced}
\left\lbrace
\begin{array}{l|r}
 x^i. (g_j - g_{j-1}, -( g_j - g_{j-1}), 0, \dots, 0) & 0 \leq i \leq m-1\\
 & 2 \leq j \leq n-1
 . \end{array}
 \right\rbrace
 \end{equation}
 
The presentation for $H_1(K(\pres, \phi_m), \ZZ)$ is
\begin{equation*}
H_1(K(\pres,\phi_m), \ZZ) = \left( \begin{array}{l|r}
	 (g_1,\dots,g_1) &   \Tilde{R} \\
	 x^i(g_{j+1} - g_j,0,\dots,0), 1\leq j \leq n-2, 0 \leq i \leq m-1 ~& 
	\end{array} \right) 
\end{equation*}

\begin{equation*}
= \left( \begin{array}{l|r}
	 (g_1,\dots,g_1) &   \\
	 (g_{j+1} - g_j,0,\dots,0), 1\leq j \leq n-2  ~& 
	\end{array} \right) 	
	\end{equation*}
\begin{equation*}
\cong \ZZ^{n-1}
\end{equation*}
\end{remark}

\subsection{Arrangements intersecting in general position}$~$

We begin by recalling the following theorem of Oka and Sakamoto \cite{Oka-Sakamoto-ProductTheorem-MR513072} concerning algebraic plane curves:

\begin{theorem}{\rm \cite{Oka-Sakamoto-ProductTheorem-MR513072}}\label{thm:os-split} Let $C$ be an algebraic plane curve in $\CC^2$ such that $C = C_1 \cup C_2$, where $C_i$ has degree $d_i$. If $C_1 \cap C_2$ consists of $d_1 \cdot d_2$ points, then 
\begin{equation*}
\pi_1(\CC^2 - (C_1 \cap C_2)) \cong \pi_1(\CC^2 - C_1) \times \pi_1(\CC^2 - C_2).
\end{equation*} 
\end{theorem}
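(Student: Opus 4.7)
The plan is to apply the Zariski--Van Kampen theorem, using the fact that the hypothesis $\lvert C_1 \cap C_2 \rvert = d_1 d_2$ forces every intersection of $C_1$ and $C_2$ to be transverse at a smooth point of each curve. Indeed, by B\'ezout's theorem the total intersection number of $C_1$ and $C_2$ in $\CP^2$ is $d_1 d_2$, so realizing this count with $d_1 d_2$ distinct affine points forces each local intersection multiplicity to equal one: every point of $C_1 \cap C_2$ is a node of $C = C_1 \cup C_2$ whose two branches come from different components. This local transversality is the geometric input that eventually yields the commutation relations needed for a direct product decomposition.

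Concretely, I would first choose a generic linear projection $\pi \colon \CC^2 \to \CC$ so that (i) $\pi|_{C}$ has finitely many critical values, (ii) each critical value corresponds to a unique ``event'' --- a singularity of $C_1$ alone, a singularity of $C_2$ alone, or a node of $C_1 \cap C_2$ --- and (iii) a generic reference fiber $L_0 = \pi^{-1}(t_0)$ meets $C$ transversely in $d_1 + d_2$ distinct smooth points. The Zariski--Van Kampen theorem then presents $\pi_1(\CC^2 \setminus C)$ with one meridian generator per intersection point of $L_0$ with $C$: call these $a_1, \ldots, a_{d_1}$ (the $C_1$-meridians) and $b_1, \ldots, b_{d_2}$ (the $C_2$-meridians), modulo the relations obtained by running the braid monodromy around each critical value.

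Next I would analyze the relations critical-value by critical-value. A critical value coming from a singularity of $C_1$ alone only permutes and twists the $a$-strands while fixing each $b$-strand pointwise; collecting all such gives a set of relations $R_1$ that is precisely a Zariski--Van Kampen presentation of $\pi_1(\CC^2 \setminus C_1)$. Symmetrically, critical values associated to $C_2$ contribute relations $R_2$ presenting $\pi_1(\CC^2 \setminus C_2)$. At a critical value corresponding to a node $p \in C_1 \cap C_2$, transversality implies the local picture is two smooth branches meeting transversely, so the local braid monodromy is the full twist on the two strands $a_i$ and $b_j$ approaching $p$, which contributes exactly the commutator relation $[a_i, b_j] = 1$. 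Assembling these, the full presentation is
\begin{equation*}
\langle a_1, \ldots, a_{d_1},\ b_1, \ldots, b_{d_2} \mid R_1,\ R_2,\ [a_i, b_j] \text{ for all } i,j \rangle,
\end{equation*}
which is the standard presentation of the direct product $\pi_1(\CC^2 \setminus C_1) \times \pi_1(\CC^2 \setminus C_2)$.

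The main obstacle is justifying that the braid monodromy decomposes cleanly in the way just described. One must check that a generic projection exists which separates the three classes of critical events, and --- more delicately --- that at a nodal critical value the monodromy is supported on a pair of strands of different ``colors'' with no interaction with the remaining strands. This is precisely the local content of a transverse double point, but matching the local model to the global choice of reference fiber and basepoint paths requires care; once this decomposition of the braid monodromy is in hand, the algebraic step of identifying the resulting presentation with the claimed direct product is routine.
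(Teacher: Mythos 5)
Your overall strategy (Zariski--Van Kampen plus the B\'ezout observation that $d_1d_2$ distinct affine points forces transverse smooth intersections and no intersections at infinity) is the natural starting point, and the B\'ezout step is fine. But the step you wave through is exactly the mathematical content of the theorem, and as written it fails. When you run the braid monodromy around a critical value, the relations you obtain are stated in terms of the local meridians \emph{transported back to the reference fiber along the chosen paths}; these are conjugates $a_i^{u}$, $b_j^{v}$ of the standard generators by words $u,v$ in \emph{all} the generators, and the words need not coincide or be trivial. So a nodal fiber contributes a relation of the form $\bigl[a_i^{u}, b_j^{v}\bigr]=1$, not $[a_i,b_j]=1$, and likewise the relations coming from the critical values of $C_1$ alone (including vertical tangencies, which your trichotomy omits) may a priori involve conjugation by $b$-letters, so they are not automatically ``a presentation of $\pi_1(\CC^2\setminus C_1)$ in the $a$'s only.'' A generic choice of projection does not by itself kill these conjugating words; controlling them is precisely the hard part, and your closing paragraph acknowledges the obstacle without supplying the idea that overcomes it, so the argument is incomplete at its crucial point.

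For comparison: the paper does not reprove this statement (it is quoted from Oka--Sakamoto), but it recalls the mechanism of their proof, which is exactly the missing ingredient. Oka and Sakamoto construct a one-parameter family $C_1(t(s))\cup C_2(\tau(s))$, with $C_1(t(s))=\{f(t(s)x,y)=0\}$ and $C_2(\tau(s))=\{g(x,\tau(s)y)=0\}$, along which the complement's homeomorphism type is constant; the deformation pulls the two curves into a special relative position in which the transport words can be controlled, so that the presentation visibly takes the split form $\langle a_j, b_k : [a_j,b_k],\ R_a,\ R_b\rangle$ with $R_a$ involving only $a$'s and $R_b$ only $b$'s, from which the direct product is immediate. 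To complete your argument you would need either this degeneration (or an equivalent device, e.g.\ an explicit fibered or ``generic at infinity'' position argument) showing the conjugating words can be normalized away. As a minor point, the displayed conclusion in the paper's statement has a typo ($\CC^2-(C_1\cap C_2)$ should be $\CC^2-(C_1\cup C_2)$), which you implicitly and correctly fixed.
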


The importance of this theorem comes from its proof. In order to establish the isomorphism, Oka and Sakamoto use a 1-parameter family of curves. More explicitly: let $f(x,y)$ and $g(x,y)$ be defining polynomials of $C_1$ and $C_2$ respectively. Letting $C_1(t(s)) = f(t(s)x,y), C_2(\tau(s))=g(x,\tau(s)y)$, the authors construct a smooth one parameter family of curves $\{C_1(t(s)) \cup C_2(\tau(s)) ; 0 \leq s\leq 1\}$. This construction is made such that $\CC^2 \setminus C_1 \cup C_2$ is homeomorphic to $\CC^2 \setminus C_1(t(s)) \cup C_2(\tau(s))$ for all $s$ and $C_1 = C_1(t(0))$, $C_2 = C_2(\tau(0))$.

They then proceed to construct a presentation (which we choose to be an Arvola-Randell presentation) for $ \pi_1(\CC^2 \setminus C_1(t(s_0)) \cup C_2(\tau(s_0))) $ given by
\begin{equation}
\label{eqn:os-pi1}
\pres = \langle a_j , b_k : [a_j,b_k], R_a , R_b \rangle
\end{equation}
where $R_a$ (respectively $R_b$) consists of relations involving only $a_i$'s (respectively $b_k$'s), and $1\leq j \leq d_1$  ($1 \leq k \leq d_2$). Additionally, the presentations for $ \pi_1(\CC^2 \setminus C_1)$ and $ \pi_1(\CC^2 \setminus C_2)$ are given respectively by
 \begin{align*}
\pres_1 &= \langle a_j : R_a \rangle \\
\pres_2 &= \langle b_k : R_b \rangle
 \end{align*} In the case when $C_1$ and $C_2$ define arrangements, the presentations given above may chosen to be Arvola-Randell presentations.

Using Oka and Sakamoto's presentation of the fundamental group applied to arrangement complements, we may prove the following theorem.
\begin{theorem}
\label{thm:OS-MF}
 Let $\mathcal{C}$ be an arrangement of $n$ lines in $\CC^2$ such that $\mathcal{C} = \mathcal{A} \coprod \mathcal{B}$ such that $\mathcal{A}$ and $\mathcal{B}$ intersect in general position, ie $\mathcal{C}$ satisfies the hypotheses of Theorem \ref{thm:os-split}. If $F$ is the Milnor fiber associated to the cone over this arrangement, then $H_1(F) \cong \ZZ^{n}$. 
\end{theorem}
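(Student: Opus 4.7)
The plan is to exploit the product structure hidden in the Oka--Sakamoto presentation \eqref{eqn:os-pi1}. The relations $\{[a_j,b_k]\}\cup R_a\cup R_b$ together with the universal property of direct products give an isomorphism $G:=\pi_1(M(\darr))\cong G_1\times G_2$, where $G_1=\langle a_j\mid R_a\rangle$ and $G_2=\langle b_k\mid R_b\rangle$ are the fundamental groups of $\CC^2\setminus C_1$ and $\CC^2\setminus C_2$. Under this isomorphism, the covering map $\phi\colon G\to\ZZ_n$ sending each meridional generator to $1$ takes the form $\phi(g,h)=\phi_1(g)+\phi_2(h)\pmod{n}$, and the Milnor fiber $F$ corresponds to the normal subgroup $H:=\ker\phi$. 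My goal is to compute $H_1(F)=H^{\mathrm{ab}}$ purely algebraically, by proving that the two commutator subgroups $[G,G]$ and $[H,H]$ coincide.

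Only one inclusion is nontrivial, namely $[G,G]\subseteq[H,H]$. Since $H\lhd G$, its commutator subgroup $[H,H]$ is itself normal in $G$; hence $[G,G]$ lies in $[H,H]$ as soon as every commutator of a pair of generators of $G$ does. For $[a_{j_1},a_{j_2}]$, set $h_i=(a_{j_i},b_1^{-1})\in G_1\times G_2$: both lie in $H$ because $\phi(h_i)=1-1=0$, and $[h_1,h_2]=\bigl([a_{j_1},a_{j_2}],e\bigr)$. The case $[b_{k_1},b_{k_2}]$ is handled symmetrically with $h_i=(a_1^{-1},b_{k_i})$, and cross commutators $[a_j,b_k]$ already vanish in the direct product. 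Standard commutator identities then propagate these generator-by-generator facts to arbitrary pairs, using normality of $[H,H]$ in $G$, and yield the desired inclusion.

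Having established $[H,H]=[G,G]$, we read off $H_1(F)=H/[G,G]=\ker\bigl(\bar\phi\colon G^{\mathrm{ab}}\to\ZZ_n\bigr)$, where $G^{\mathrm{ab}}\cong\ZZ^{d_1}\oplus\ZZ^{d_2}=\ZZ^n$ and $\bar\phi$ is the sum of coordinates modulo $n$. That kernel is a finite-index sublattice of $\ZZ^n$, hence a free abelian group of rank $n$, giving $H_1(F)\cong\ZZ^n$; the lower bound $\beta_1(F)\geq n$ from Theorem \ref{thm:minimal-betti-mf} confirms sharpness and rules out any ambiguity in the rank count. The step needing most care is the identification $G\cong G_1\times G_2$ extracted from the Oka--Sakamoto presentation, which reduces to applying the universal property of the direct product to a presentation whose generating sets commute pairwise and whose defining relators split cleanly into $R_a$ and $R_b$. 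Once that identification is in hand, the argument is purely algebraic and sidesteps the explicit CW-complex lifting analysis carried out for a single pencil in Section \ref{ssec:one-point-intersection}.
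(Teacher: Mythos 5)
Your proposal is correct, but it takes a genuinely different route from the paper. The paper argues at the level of the covering CW-complex: it lifts the Oka--Sakamoto relators $[a_j,b_k]$ to the cyclic cover via Lemma \ref{lem:construction}, collapses a maximal tree, and uses the lifted relators in tuple notation to cut the generating set of $H_1(K)\cong H_1(F)$ down to at most $d_1+d_2=n$ classes; it then must invoke the lower bound of Theorem \ref{thm:minimal-betti-mf} to conclude that an abelian group on at most $n$ generators with first Betti number at least $n$ is $\ZZ^n$. You instead stay entirely inside group theory: from the splitting $G\cong G_1\times G_2$ of Theorem \ref{thm:os-split} you prove $[H,H]=[G,G]$ for $H=\ker\phi\cong\pi_1(F)$, where the decisive trick is pairing each $a_{j_i}$ with the single commuting element $b_1^{-1}$ (and symmetrically $a_1^{-1}$ with the $b_k$'s) so that every generator commutator of $G$ is literally a commutator of two elements of $H$, after which normality of $[H,H]$ in $G$ gives the inclusion $[G,G]\subseteq[H,H]$. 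This yields $H_1(F)=H/[G,G]=\ker\bigl(\bar\phi\colon G^{\mathrm{ab}}\to\ZZ_N\bigr)$, a finite-index sublattice of $G^{\mathrm{ab}}\cong\ZZ^{d_1}\oplus\ZZ^{d_2}=\ZZ^n$, hence free abelian of rank $n$ \emph{exactly}, with no appeal to Theorem \ref{thm:minimal-betti-mf} (you cite it only as a sanity check). What each approach buys: yours is shorter, self-contained, and delivers torsion-freeness and the rank in one stroke; the paper's tuple computation is deliberately aligned with the machinery of Section \ref{ssec:one-point-intersection} that is reused in Theorems \ref{thm:OneHyp} and \ref{thm:one-rel-prime}. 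Two small corrections: the deck group of the Milnor-fiber cover of $M(\darr)=M(\mathcal{C})$ is $\ZZ_{n+1}$, since the cone over $n$ affine lines has $n+1$ hyperplanes (the paper's proof uses $\phi\colon G\twoheadrightarrow\ZZ_{n+1}$); this is harmless, as your kernel argument works verbatim for any modulus $N\geq 1$. Also, your choice of $b_1$ (resp.\ $a_1$) tacitly assumes both subarrangements are nonempty, which is implicit in the hypothesis that $\arA$ and $\arB$ meet in $d_1d_2$ points and is likewise assumed by the paper.
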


\begin{proof}
Assume that the arrangement decomposes into two subarrangements $\arr$ and $\barr$ of degrees $d_1$ and $d_2$ respectively. Then we may conclude by Theorem \ref{thm:os-split} and \eqref{eqn:os-pi1} that
\begin{equation*}
\pres := \langle a_j , b_k : [a_j,b_k], R_a , R_b \rangle
\end{equation*}
where the $a_j$'s and $b_k$'s are generators corresponding to transverse loops around hyperplanes in $\arr$ and $\barr$ respectively and $\pres$ is an Arvola-Randell presentation for $\pi_1(M(\arr))$.

Using $\pres$  and $\phi\colon G(\pres) \twoheadrightarrow \ZZ_{n+1}$ (the homomorphism induced by $a_j \mapsto 1$, $b_k \mapsto 1$), by Lemma \ref{lem:construction} we have a CW-complex $K$ such that $H_1(K,\ZZ) \cong H_1(F,\ZZ)$.

We have the following boundaries in $K$ coming from lifts of $[a_j,b_k]$:
\begin{equation}
\label{eqn:relators-commute}
\left\lbrace
\begin{array}{l|r}
&~~0 \leq t \leq n \\
x^t .(a_j -b_k, - a_j + b_k , 0, \dots, 0) &~~1 \leq j \leq d_1 \\
&~~1 \leq k \leq d_2
 \end{array}
\right\rbrace.
\end{equation}

We now collapse the CW-complex $K$ along the maximal tree given by $\{xa_1,x^2 a_1, \dots, x^{n} a_1\}$. This allows us to give a presentation 
 \begin{equation}
\label{eqn:homology-split}
\begin{array}{lr}
H_1(K) &= \left( (a_1,0,\dots,0), x^t.( a_j, 0, \dots, 0) , x^t.( b_k,0,\dots,0) : \Tilde{R} \right) \\
 &~~~~~~~0 \leq t \leq n, ~2 \leq j \leq d_1 \text{, and}~ 1\leq k \leq d_2 
\end{array}
\end{equation}
where the relators in $\Tilde{R}$ coming from \eqref{eqn:relators-commute} are 

\begin{equation}
\label{eqn:fromR-1}
\left\lbrace
\begin{array}{l|r}
(a_1 -b_k, b_k, 0, \dots, 0)& 1\leq k \leq d_2\\
x^\tau (-b_k, b_k, 0, \dots, 0) & 1 \leq \tau \leq n \\
(b_k - a_1,0, \dots, 0, - b_k)& 
\end{array}
\right\rbrace
\end{equation}
and
\begin{equation}
\label{eqn:fromR-2a}
\left\lbrace
\begin{array}{l|r}
& 2 \leq j \leq d_1 \\
x^t. (a_j - b_k, b_k - a_j, 0, \dots, 0) & 0 \leq t \leq n \\
& 1\leq k \leq d_2
\end{array}
\right\rbrace
\end{equation}

Using the relators in \eqref{eqn:fromR-1}, we may rewrite the relators in \eqref{eqn:fromR-2a} as
\begin{equation}
\label{eqn:fromR-2b}
\left\lbrace
\begin{array}{l|r}
(a_1 - a_j, a_j, 0, \dots, 0) & 2 \leq j \leq d_1 \\
x^\tau . (-a_j , a_j, 0, \dots, 0) & 0 \leq t \leq n \\
(a_j - a_1, 0, \dots, 0 - a_j) & 
\end{array}
\right\rbrace
\end{equation}

Therefore, by using the relators in \eqref{eqn:fromR-1} and \eqref{eqn:fromR-2b} on the basis given for $H_1(K)$ in \eqref{eqn:homology-split}, we have that the number of generators for $H_1(K)\cong H_1(F)$ is at most $d_1 + d_2$. However, by Theorem \ref{thm:minimal-betti-mf}, we have that $b_1(F) \geq b_1(M(\arr)) = d_1 + d_2 = n$. Thus we have $H_1(F) \cong \ZZ^n$.
\end{proof}

\subsection{Combinatorial criterion for an upper bound} $~$

We now prove the main theorem of this paper:

\begin{theorem}
\label{thm:OneHyp}
 Let $\arr$ be a complexified real-arrangement of $n$ hyperplanes in $\CP^2$, and let $F$ be the associated Milnor fiber. For any hyperplane $H \in \arr$, let $V$ be the set of multiple points on $H$, and let $m_v$ denote the multiplicity of the point $v \in V$. Then, for any field $\KK$, the first betti number of $F$ with respect to $\KK$ satisfies 
 \begin{equation*}
b_1(F,\mathbb{K}) \leq (n-1) + \sum_{v \in V} \left[ (m_v -2)(\gcd(m_v,n) -1) \right]
\end{equation*}
 \end{theorem}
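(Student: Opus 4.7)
My strategy is to reduce the global computation of $H_1(F,\KK)$ to iterated local Tietze reductions of the form carried out for a single pencil in \fullref{ssec:one-point-intersection}. The first step is to decone $\arr$ at a suitable hyperplane $H_0\in\arr$; the resulting affine arrangement $\darr$ has $n-1$ lines, and each point $v\in V$ corresponds to a pencil of either $m_v$ or $m_v-1$ lines in $\darr$, depending on whether $H_0$ is chosen to avoid or to meet $v$. I then write the Arvola--Randell presentation $\pres$ of $\pi_1(M(\darr))$ and apply \fullref{lem:construction} with the Milnor covering map $\phi\colon\pi_1(M(\darr))\to\ZZ_n$ to obtain a CW-complex $K$ with $H_1(K,\ZZ)\cong H_1(F,\ZZ)$, reducing the task to bounding $\dim_\KK H_1(K,\KK)$.

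The central step is a local analysis at each $v\in V$. The generators and relators attached to the pencil at $v$ form a block of exactly the shape studied in \fullref{ssec:one-point-intersection}, with the pencil size playing the role of ``$n-1$'' in the notation there and the cover degree $n$ playing the role of ``$m$''. Running the Tietze moves that lead from \eqref{eqn:pres-pencil} through \eqref{eqn:gcd-relators} and then invoking the coprime simplification of \fullref{rem:rel-prime-relators-reduced} collapses this block to a baseline contribution together with a local excess of rank at most $(m_v-2)(\gcd(m_v,n)-1)$. Every non-multiple intersection point of $\darr$ lands in the coprime case of \fullref{rem:rel-prime-relators-reduced} and contributes nothing beyond the single shared baseline. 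Summing the baseline $n-1$ with the excesses over $v\in V$ yields the claimed bound.

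The hardest part will be coherently executing these local reductions in the global complex $K$: a meridional generator associated to a hyperplane that passes through several points of $V$ participates in several distinct pencil blocks, so the Tietze moves at different $v\in V$ are not manifestly independent. I would address this by first collapsing a spanning tree of $K$ built from the lifts of one carefully chosen generator, in analogy with the distinguished generator ``$g_1$'' of \fullref{ssec:one-point-intersection}, so that after collapse the remaining non-tree generators decouple by pencil. A secondary subtlety is the wrap-around edge case $\gcd(m_v,n)=m_v$ in \eqref{eqn:gcd-relators}, which requires a direct check to confirm that the upper bound $(m_v-2)(m_v-1)$ is still respected.
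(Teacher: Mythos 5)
Your overall strategy (build $K$ via \fullref{lem:construction}, run the pencil analysis of \fullref{ssec:one-point-intersection} locally at each $v\in V$, and decouple the blocks by collapsing a tree on a distinguished generator) is the same skeleton as the paper's argument, but the deconing step creates a genuine gap. Any $H_0\in\arr$ you decone at meets $H$ in a point $v_0\in V$, and in the affine chart the remaining $m_{v_0}-1$ lines through $v_0$ become \emph{parallel}: there is no vertex there, hence no relator block at $v_0$ in the affine Arvola--Randell presentation at all (and even if you treated it as a pencil of $m_{v_0}-1$ lines, the gcd appearing in the local analysis would be $\gcd(m_{v_0}-1,n)$ rather than $\gcd(m_{v_0},n)$ --- for instance $n=12$, $m_{v_0}=5$ gives $\gcd(4,12)=4$ where the theorem allows no excess since $\gcd(5,12)=1$). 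Even granting your local reductions at every other point: the collapsed $1$-skeleton has $(n-1)^2$ generating cycles, each $v\neq v_0$ eliminates at most $(n-1)+(m_v-2)(n-\gcd(m_v,n))$ of them, and using $\sum_{v\in V}(m_v-1)=n-1$ the survivor count is $(n-1)+(m_{v_0}-2)(n-1)+\sum_{v\neq v_0}(m_v-2)(\gcd(m_v,n)-1)$, overshooting the claimed bound by $(m_{v_0}-2)(n-\gcd(m_{v_0},n))$ whenever $m_{v_0}>2$; and you cannot in general choose $H_0$ so that $H\cap H_0$ is a double point. The paper sidesteps this precisely by \emph{not} deconing away a generator: it works with the projective presentation keeping one meridian per hyperplane plus the projective relation $P$, so every $v\in V$ retains its clean block $[h,a_{v,2},\dots,a_{v,m_v}]$, and the one extra class is killed by $P$ (\fullref{lem:infinity}).

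A second, related gap is your assertion that each block has ``exactly the shape studied'' in \fullref{ssec:one-point-intersection} and that a tree collapse makes the blocks decouple. In an affine Arvola--Randell presentation the relators at a vertex are generally conjugated, of the form $[\alpha_1^{\Gamma_1},\dots,\alpha_m^{\Gamma_m}]$; their lifts acquire extra difference terms (the $A_i$ in the proof of \fullref{thm:one-rel-prime}), and those are cancelled there only because the coprimality hypothesis first yields the reduced relators \eqref{eqn:relators-aik} --- a tool you do not have when $\gcd(m_v,n)>1$. Likewise, the disjointness of the eliminations in the paper comes from writing all surviving cycles as differences against the meridian $h$ of $H$ itself (\fullref{lem:gens}), so that blocks at distinct points of $V$ share only $h$; for a tree built on any other generator, or for pencils not lying on the common line $H$, the blocks genuinely interact and ``decoupling by pencil'' fails. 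By contrast, the wrap-around case $\gcd(m_v,n)=m_v$ that you flag is harmless, since $m_v\le n$ means the lifted tuples never wrap.
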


  As a corollary to the Theorem, we have the following:
  \begin{corollary}
  	\label{Combo}
  	Let $\arr$ be a complexified real-arrangement of $n$ hyperplanes in $\CP^2$, and let $F$ be the associated Milnor fiber. 
  	If there exists $H \in \arr$ such that for all $v \in V_{H}$ we have $m(v) =2$ or $\gcd(m(v),n) = 1$, then $H_1 (F; \ZZ) \cong \ZZ^{n-1}.$
  \end{corollary}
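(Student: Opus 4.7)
The plan is to derive the corollary directly from Theorem \ref{thm:OneHyp} by checking that the combinatorial hypothesis annihilates the entire error sum on the right-hand side, and then upgrading the resulting rank equality to the full integral isomorphism by invoking the universal coefficient theorem with coefficients in every prime field.

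First I would fix a hyperplane $H$ satisfying the hypothesis and verify term by term that the sum in Theorem \ref{thm:OneHyp} vanishes. For every $v \in V_H$, either $m_v = 2$, in which case $(m_v - 2) = 0$, or $\gcd(m_v, n) = 1$, in which case $(\gcd(m_v, n) - 1) = 0$; in either case the contribution $(m_v - 2)(\gcd(m_v, n) - 1)$ is zero. Theorem \ref{thm:OneHyp} therefore delivers $b_1(F; \KK) \leq n - 1$ for every field $\KK$.

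Next I would combine this with the lower bound from Theorem \ref{thm:minimal-betti-mf}, which (via the regular $n$-fold cyclic cover $F \to M(\decone \arr)$ whose base has $b_1 = n - 1$) gives $b_1(F; \mathbb{Q}) \geq n - 1$. Together with the $\KK = \mathbb{Q}$ case of the upper bound, this forces $\mathrm{rank}_{\ZZ}\, H_1(F; \ZZ) = n - 1$. To eliminate torsion, write $H_1(F; \ZZ) = \ZZ^{n-1} \oplus T$ with $T$ the finite torsion subgroup; since $H_0(F; \ZZ) = \ZZ$ is torsion free, the universal coefficient theorem yields
\begin{equation*}
\dim_{\mathbb{F}_p} H_1(F; \mathbb{F}_p) \;=\; (n-1) \;+\; \dim_{\mathbb{F}_p}\bigl(T \otimes \mathbb{F}_p\bigr)
\end{equation*}
for every prime $p$. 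The upper bound $b_1(F; \mathbb{F}_p) \leq n - 1$ from the first step forces $T \otimes \mathbb{F}_p = 0$ for every prime $p$, hence $T = 0$ and $H_1(F; \ZZ) \cong \ZZ^{n-1}$.

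The essential content is of course concentrated in Theorem \ref{thm:OneHyp} itself; once its field-uniform upper bound is in hand, the corollary is a three-line consequence. The only point demanding attention is that the upper bound must hold over every $\mathbb{F}_p$ and not merely in characteristic zero, because it is precisely this mod-$p$ uniformity that upgrades the numerical rank equality into the torsion-freeness statement for $H_1(F; \ZZ)$.
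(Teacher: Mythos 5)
Your proposal is correct and follows essentially the same route as the paper: the hypothesis kills every summand in the bound of Theorem \ref{thm:OneHyp}, the lower bound $b_1(F)\geq n-1$ comes from Theorem \ref{thm:minimal-betti-mf} via the cyclic cover of $M(\decone\arr)$, and torsion is excluded because it would raise some mod-$p$ betti number above the field-uniform upper bound. Your explicit universal-coefficient computation is just a spelled-out version of the paper's remark that ``any $p$-torsion would be detected by an increase in the betti number with respect to a field of characteristic $p$.''
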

  \begin{proof}The statement follows immediately from Theorem \ref{thm:OneHyp} as we have 
  	\begin{align*}
  	b_1(F,\KK) &\leq (n-1) + \sum_{v \in V} \left[ (m_v -2)(\gcd(m_v,n) -1) \right]\\
  	& = (n-1) + 0 
  	\end{align*}
  	
  	From Theorem \ref{thm:minimal-betti-mf} we have that $b_1(F,\CC) \geq b_1(\decone \arr) = n-1$. Any $p$-torsion would be detected by an increase in the betti number with respect to a field of characteristic $p$. Since the numbers have the same bound for all fields, we may conclude that $H_1(F, \ZZ)$ $\cong \ZZ^{n-1}$.\end{proof}

\begin{remark}Using perverse sheaves and vanishing cycles, Massey \cite{Massey-Perversity-duality-MR1416758} showed the same upper bounds as Theorem \ref{thm:OneHyp} for the betti number with complex coefficients. We are not sure if his methods may be used to prove the same upper bound holds for betti numbers with coefficients in fields of all characteristics.
\end{remark}

In order to prove Theorem \ref{thm:OneHyp}, we use the presentation for the fundamental group induced by considering the arrangement as an arrangement of projective lines in the complex projective plane. Let $\arr = \{H_i\}_{i=0}^n$ be a central arrangement of hyperplanes in $\CC^{l+1}$. Let $h\colon \CC^{l+1} \setminus \{0\} \to \CP^l$ denote the projection map of the Hopf bundle with fiber $\CC^*$. Then $h$ maps the complement of the arrangement $M(\arr)$ onto the complement of the projectivization of the arrangement, which we identify with the complement of the decone of the arrangement $M(\darr)$ in $\CC^n$ (see \cite{OT-Arrs-MR1217488}). 

Meridional loops $g_{H_i}$, one around each hyperplane in $\arr$, generate the fundamental group $\pi_1(M(\arr))$. These may be chosen compatibly so that $\prod g_{H_i} $ is null homotopic in $\pi_1(M(\darr))$. In fact, $\pi_1(M(\arr)) / \langle \prod g_{H_i}  \rangle \cong \pi_1(M(\darr))$ (Section 3.1 \cite{CDS-torsion-MF-homology-MR1997327}, \cite{Garber-affine-proj-MR2145948}).

The advantage to this approach is that in the generic section used to calculate $\pi_1(M(\arr))$ any two distinct hyperplanes intersect in one point. However, we are left with more relations in the resulting presentation for $\pi_1(M(\darr))$ than if we had calculated $\pi_1(M(\darr))$ directly via the Arvola-Randell algorithm.

\begin{example} Let $\arr$ be the arrangement in $\CC^3$ with defining polynomial  $Q(\arr)=xyz$. The complement of the decone of this arrangement may be identified with the complement of $\arr$ considered as an arrangement in the complex projective plane. We then have a presentation for $\pi_1(M(\darr))$ given by $$\langle g_x, g_y, g_z : [g_x,g_y], [g_x,g_z],[g_y,g_z] , g_x g_y g_z =1 \rangle.$$ This presentation is equivalent to the presentation for $\pi_1(M(\darr))$ considered as an arrangement in $\CC^2$: $$\langle h_x, h_y : [h_x,h_y] \rangle.$$\end{example}

From Section \ref{ssec:mf-covering} we see that we may use any presentation for the arrangement as long as we may relate the generators to the associated map $\phi \colon \pi_1(M(\darr)) \to \ZZ_n$. Let $g_{H_n}$ be the meridional generator associated to the hyperplane at infinity. Then as we know that $\phi(g_{H_i}) = 1$ for $0 < i < n-1$, one may see that the relator $\prod g_{H_i}$ forces $\phi(g_{H_n}) = 1$ as well. Therefore, we extend the map $\phi$ to be a map on the presentation induced by considering the presentation in projective space and keep the same notation.

\subsection{Proof of Theorem \ref{thm:OneHyp}} $~$
 
As $\arr$ is a complexified-real arrangement in $\CP^2$, we may depict the arrangement along an arbitrary hyperplane $H$ as in Figure \ref{fig:one-hyp-cp2}. We label the other hyperplanes according to their intersections with $H$ in $k= |V|$ distinct points by two indices: the first indicating the intersection point, the second the position of the hyperplane with respect to the intersection point. Note that the multiplicity of each intersection point $p$ is given by $m_p$, and that $H$ is always the first hyperplane with respect to each intersection point. We then have
\begin{equation*}
\arr = \{ H, A_{1,2}, A_{1,3}, \dots, A_{1,m_1}, A_{2,2}, \dots, A_{k,m_k} \}.
\end{equation*}
\begin{figure}[ht!]
	\begin{center}
	\begin{tikzpicture}[y=0.80pt,x=0.80pt,yscale=-1, inner sep=0pt, outer sep=0pt,scale=0.5]
\begin{scope}[shift={(0,-52.362183)}]
  \path[shift={(0,52.362183)},draw=black,line join=miter,line cap=butt,line
    width=0.800pt] (454.5686,244.4059) .. controls (454.5686,668.6700) and
    (454.5686,668.6700) .. (454.5686,668.6700);
  \path[shift={(0,52.362183)},draw=black,line join=miter,line cap=butt,line
    width=0.800pt] (244.4569,638.3654) -- (903.0764,383.8069);
  \path[shift={(0,52.362183)},draw=black,line join=miter,line cap=butt,line
    width=0.800pt] (244.4569,484.8222) -- (806.1017,676.7512);
  \path[shift={(0,52.362183)},draw=black,line join=miter,line cap=butt,line
    width=0.800pt] (264.6600,357.5430) -- (862.6703,583.8171);
  \path[shift={(0,52.362183)},draw=black,line join=miter,line cap=butt,line
    width=0.800pt] (262.6397,333.2993) -- (854.5891,298.9541);
  \path[shift={(0,52.362183)},draw=black,line join=miter,line cap=butt,line
    width=0.800pt] (252.5381,272.6902) -- (897.0155,430.2740);
  \path[fill=black] (456.58896,721.0321) node[above right] (text3703) {$H$     };
  \path[fill=black] (509.11688,686.68701) node[above right] (text3711) {$A_{1,2}$
    };
  \path[fill=black] (515.1778,599.81384) node[above right] (text3715) {$A_{1,3}$
    };
  \path[fill=black] (521.23871,504.85953) node[above right] (text3719) {$A_{2,2}$
    };
  \path[shift={(0,52.362183)},fill=black] (503.05597,379.76633) node[above right]
    (text3723) {$A_{3,2}$     };
  \path[shift={(0,52.362183)},fill=black] (505.07626,292.89322) node[above right]
    (text3727) {$A_{3,3}$     };
  \path[shift={(0,52.362183)},miter limit=4.00,line width=0.080pt]
    (897.0155,485.8324) .. controls (897.0155,650.4105) and (726.9655,783.8274) ..
    (517.1981,783.8274) .. controls (307.4308,783.8274) and (137.3808,650.4105) ..
    (137.3808,485.8324) .. controls (137.3808,321.2543) and (307.4308,187.8374) ..
    (517.1981,187.8374) .. controls (726.9655,187.8374) and (897.0155,321.2543) ..
    (897.0155,485.8324) -- cycle;
\end{scope}

\end{tikzpicture}
	\caption{Depiction along $H$ of the arrangement.}

\label{fig:one-hyp-cp2}
\end{center}
\end{figure}
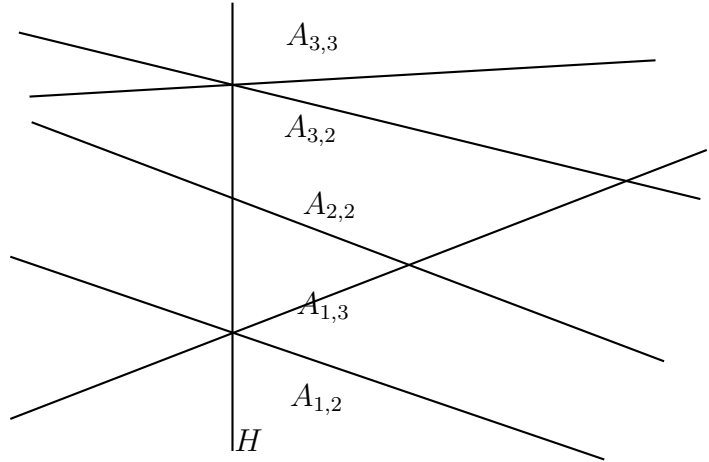
We denote the meridional loop coming from each hyperplane by a corresponding lower case letter and have the following presentation from the Arvola-Randell algorithm 

\begin{equation*}
\pres = \langle h,a_{1,2},a_{1,3}, \dots, a_{1,m_1}, a_{2,2}, \dots, a_{k,m_k} : R_H \cup P \cup R \rangle
\end{equation*}
where

\begin{equation*}
R_H = \left\{ \begin{matrix}
[h,a_{1,2},\dots, a_{1,m_1}]\\ 
[h,a_{2,2},\dots, a_{2,m_2}]\\ 
[h,a_{3,2 },\dots, a_{3,m_3}] \\ 
\dots \\
[h,a_{k,2 },\dots, a_{k,m_k}] 
\end{matrix} \right\}
\end{equation*}

\begin{equation*}
P = \{h\cdot  \prod_{j=2}^{m_1} a_{1,j} \cdots \prod_{j=2}^{m_k} a_{k,j}=1\},
\end{equation*}
where the relations in $R_H$ are the relations coming from multiple points along the hyperplane $H$, $R$ is used to denote the other relations, and $P$ is the projective relation.

 As before, define $\phi:\pi_1(M(\arr)) \rightarrow \ZZ_n$ by sending all meridional generators to $1 \in \ZZ_n$ and let $K = K(\pres,\phi)$. 
\begin{lemma}\label{lem:gens} 
$H_1(K)$ has a presentation as $(G:\Tilde{R} \cup \Tilde{P} \cup \Tilde{R}_H)$ with 
\begin{equation}
\label{eqn:gens}
G=\left\lbrace
\begin{array}{l|r}
x^j .(a_{i,2} - h,0,\dots, 0) & 0 \leq j \leq n-1 \\
x^j.(a_{i,p_i+1} - a_{i,p_i},0, \dots, 0) & 1 \leq i \leq k \\
(h, a_{1,2}, \dots, a_{k,m_k}) & 2 \leq p_i < m_i
\end{array}
\right\rbrace
\end{equation}
\end{lemma}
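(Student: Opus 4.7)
The plan is to compute $H_1(K)$ directly from the CW-structure on $K$ described in the paragraph preceding Lemma~\ref{lem:construction}: there are $n$ zero-cells $\{x^j v\}_{j=0}^{n-1}$, $n$ lifts $x^j g$ of each of the $n$ generators of $\pres$ (one generator per hyperplane of $\arr$), and 2-cells obtained as the $\ZZ_n$-translates of the 2-cells of $C(\pres)$. Every 1-cell $x^j g$ runs from $x^j v$ to $x^{j+1} v$, so the 1-skeleton $K^{(1)}$ is a connected graph on $n$ vertices with $n^2$ edges, and consequently $Z_1(K^{(1)})$ is free abelian of rank $n^2 - n + 1$.

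I would then verify that the set $G$ is a basis of $Z_1(K^{(1)})$. Each of the three families in $G$ is visibly a 1-cycle: the two difference-type generators have cancelling boundary contributions, and $(h, a_{1,2}, \dots, a_{k,m_k})$ represents the loop $h + x a_{1,2} + \cdots + x^{n-1} a_{k,m_k}$ threading once through all $n$ vertices, whose boundary telescopes to zero. A direct count using $1 + \sum_i(m_i - 1) = n$ gives $|G| = nk + n\sum_i (m_i - 2) + 1 = n(\sum_i m_i - k) + 1 = n(n-1) + 1$, matching the rank of $Z_1(K^{(1)})$. To show spanning I would fix the spanning tree $T = \{x^j h : 0 \leq j \leq n-2\}$ of $K^{(1)}$; the associated non-tree-edge basis consists of difference cycles $x^j(a_{i,p} - h)$ and the ``long'' cycle $\sigma_0 = h + xh + \cdots + x^{n-1}h$. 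The telescoping identity
\begin{equation*}
x^j(a_{i,p} - h) = x^j(a_{i,2} - h) + \sum_{q=2}^{p-1} x^j(a_{i,q+1} - a_{i,q})
\end{equation*}
puts every difference cycle into the $\ZZ$-span of the first two families of $G$, and $\sigma_0$ is recovered from $(h, a_{1,2}, \dots, a_{k,m_k})$ by subtracting off the difference cycles $x^t(a_{\bullet,\bullet} - h)$ corresponding to the successive tuple entries. Since $Z_1(K^{(1)})$ is free abelian and $|G|$ equals its rank, a spanning set of this cardinality is automatically a basis.

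Finally, identifying $B_1(K)$ is routine: each 2-cell of $K$ is a $\ZZ_n$-translate of a 2-cell of $C(\pres)$ coming from one of the relators in $R_H \cup P \cup R$, and lifting these relators produces the sets $\Tilde R_H$, $\Tilde P$, and $\Tilde R$ of attaching 1-cycles. Hence $H_1(K) = Z_1(K)/B_1(K)$ admits the desired presentation $(G : \Tilde R \cup \Tilde P \cup \Tilde R_H)$. The main obstacle is the spanning verification: one must make both the telescoping identity and the deliberate choice of ``long'' generator $(h, a_{1,2}, \dots, a_{k,m_k})$, in place of the naive $\sigma_0$, fully explicit. This choice is not accidental — it is precisely what allows the lifted projective relation $\Tilde P$ to act as a single-generator relator in the later proof of Theorem~\ref{thm:OneHyp}.
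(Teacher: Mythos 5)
Your proposal is correct and follows essentially the same route as the paper: the paper also collapses the 1-skeleton along the tree $\{x^i h\}_{0\le i< n-1}$, obtains the intermediate basis of cycles $x^j(a_{i,p}-h,0,\dots,0)$ together with $(h,h,\dots,h)$, and then passes to $G$ by linear combinations. Your write-up simply makes explicit what the paper leaves implicit, namely the rank count $n^2-n+1$ and the telescoping change of basis.
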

\begin{proof} 
From the work in Section \ref{ssec:mf-covering}, we construct the CW-complex $K=K(\pres, \phi)$ such that the first homology of the Milnor fiber is isomorphic to that of $K$. Next, we collapse the 1-skeleton of $K$ along edges of the form $x^i h$ for $0\leq i < n-1$. This is a strong deformation retract of the 1-skeleton to a wedge of 1-spheres. Therefore, each remaining edge represents a 1-cycle. By uncollapsing the edges, we may represent every 1-cycle as 
\begin{equation*}\label{eqn:gens1}
x^j.(a_{i,k} - h, 0, \dots, 0) \text{ or } (h, h, \dots, h)
\end{equation*}
The set of generators in \eqref{eqn:gens} is given by linear combinations of those given in \eqref{eqn:gens1}.
\end{proof}

Now that we have a suitable set of generators, we examine the relations. We drop the second index label for notational convenience.

\begin{lemma}\label{lem:rels}
The relations of the form $[a_1,a_2,\dots, a_m]$ may be used to reduce the number of generators of $H_1(K)$
$(n-1) + (m-2)(n - \gcd(m,n))$.
\end{lemma}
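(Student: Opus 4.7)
My plan is to mirror the Tietze-transformation argument of Section~\ref{ssec:one-point-intersection} in the local setting of a single multiple point of multiplicity $m$ on $H$. Dropping the index $i$ and writing the incident hyperplanes as $g_1 = h,\, g_2 = a_2,\, \dots,\, g_m = a_m$, the Arvola--Randell commutator $[g_1,g_2,\dots,g_m]$ expands to the $m-2$ relators $R_1,\dots,R_{m-2}$ that appear in Section~\ref{ssec:one-point-intersection}; each lifts to $n$ copies $x^i\widetilde R_j$ in the CW-complex $K$, for a total of $n(m-2)$ relators coming from this point.

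First I would apply the Tietze moves $\widetilde R_j \mapsto \widetilde R_j^{*} := \widetilde R_j - x\cdot \widetilde R_{j+1}$ for $1 \le j \le m-3$, and then the same linear-algebra reduction used in Section~\ref{ssec:one-point-intersection} to pass to the ``double-star'' relators
\begin{equation*}
x^i\widetilde R_j^{**} = x^i\bigl(g_{m-j}-g_{m-j-1},\,0,\dots,0,\,-(g_{m-j}-g_{m-j-1}),\,0,\dots,0\bigr),
\end{equation*}
with the second nonzero entry in some position $w+1$ computed by the same argument (with the roles of pencil size and covering degree interchanged relative to Section~\ref{ssec:one-point-intersection}). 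The $n$ lifts of each $\widetilde R_j^{**}$ force the coefficient tuple of the difference $g_{m-j}-g_{m-j-1}$ to be $w$-periodic across the $n$ positions, so the $n$ independent lifts of that difference generator collapse to $w$, eliminating $n-w$ generators for each such $j$.

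I would then handle the surviving relator $\widetilde R_{m-2}$ exactly as in Remark~\ref{rem:rel-prime-relators-reduced}: substituting the identities furnished by the $\widetilde R_j^{**}$'s into each lift of $\widetilde R_{m-2}$ cancels all but two adjacent entries, producing residual relators of the shape $x^i(g_2-g_1,\,g_1-g_2,\,0,\dots,0)$. These contribute an additional reduction of $n-1$ generators, corresponding to the nontrivial translates of the one remaining difference generator. Summing the $(m-3)(n-w)$ reduction coming from the $\widetilde R_j^{**}$'s together with the $n-1$ reduction from $\widetilde R_{m-2}$ and the last difference generator yields the stated total of $(n-1)+(m-2)(n-\gcd(m,n))$; the degenerate case $m=2$ is handled automatically because the factor $(m-2)$ vanishes and only the $n-1$ reduction survives.

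The main obstacle I anticipate is the bookkeeping required to match the parameter $w$ extracted directly from the Section~\ref{ssec:one-point-intersection} analysis with the $\gcd(m,n)$ appearing in the statement, i.e.\ to verify that the relator $\widetilde R_{m-2}$ supplies exactly the extra identifications needed to promote $w$ to $\gcd(m,n)$. Natural sanity checks are the case $\gcd(m,n)=1$, where the reduction should be $(n-1)(m-1)$, consistent with the full collapse to $\ZZ^{n-1}$ predicted by Corollary~\ref{Combo}, and the case $m=2$, where the reduction should be exactly $n-1$ with no contribution from the commutator. Once the local count is established, the reductions supplied by different multiple points on $H$ can be combined independently in the main body of the proof of Theorem~\ref{thm:OneHyp}.
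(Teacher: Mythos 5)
Your overall strategy is exactly the paper's: transplant the Tietze reduction of Section~\ref{ssec:one-point-intersection} to a point of multiplicity $m$ covered $n$-fold, use the ``double-star'' relators to impose $w$-periodicity (with $w=\gcd(m,n)$) on the translates of each difference generator, and use the remaining cyclic relator to eliminate $n-1$ translates of the last difference. However, there is a counting error at the very start that the lemma, being purely a count, cannot absorb. A commutator $[a_1,\dots,a_m]$ on $m$ letters encodes $m-1$ relators (the paper's algorithm gives ``$m-1$ relators for each point of multiplicity $m$''; in Section~\ref{ssec:one-point-intersection} the commutator on $n-1$ letters produced $R_1,\dots,R_{n-2}$), not the $m-2$ relators $R_1,\dots,R_{m-2}$ you claim. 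Consequently there are $m-2$ double-star families --- one for each difference $a_i-a_{i-1}$ with $3\le i\le m$ --- each eliminating $n-w$ generators, plus one cyclic relator eliminating $n-1$, which sums directly to $(n-1)+(m-2)(n-\gcd(m,n))$. Your accounting has only $m-3$ double-star families, so one difference generator (namely $a_m-a_{m-1}$ in your indexing) never gets its translates identified, and your sum $(m-3)(n-w)+(n-1)$ falls short by exactly $n-w$; the phrase ``and the last difference generator'' is doing the missing work without justification. The fix is simply to correct the enumeration, not to extract extra identifications from $\widetilde{R}_{m-2}$.

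A secondary inaccuracy: your claim that substituting the double-star identities into the lifts of the cyclic relator ``cancels all but two adjacent entries,'' leaving $x^i(g_2-g_1,\,g_1-g_2,\,0,\dots,0)$, is only valid when $w=1$ (that is Remark~\ref{rem:rel-prime-relators-reduced}); for $w>1$ the translates of the differences $a_p-a_{p-1}$, $p\ge 3$, are only identified modulo $w$, so entries in positions not congruent mod $w$ do not cancel. This does not change the count --- as in the paper, each lift $x^j\widetilde{R}_{m-1}$ lets you solve for $x^j(a_2-a_1)$, $j>0$, in terms of the remaining generators (note these are eliminations, not cyclic identifications), again yielding $n-1$ --- but the argument should be phrased that way rather than via the clean $w=1$ cancellation.
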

\begin{proof}
In  Section \ref{ssec:one-point-intersection} we showed that from a relation of the form given, we have relators in the covering space given by 
\begin{equation*}
x^j .(a_i- a_{i-1},0,\dots, 0,a_i - a_{i-1}, 0, \dots, 0)
\end{equation*}
with the second non-zero entry in the $(w+1)$-th position where $w=\gcd(m,n)$ and
\begin{equation*}
 x^j .( a_2 - a_1, a_1 - a_m, a_m - a_{m-1},\dots, a_3 - a_2, 0, \dots, 0)
\end{equation*}
for $3 \leq i \leq m$, and $0 \leq j < n$.
 
 Using the first set of relators, for each $i$ we may reduce our generating set by $\left(\frac{n}{w} -1 \right) w$ elements by identifying them with a generator of the form $(0, \dots, a_i - a_{i-1}, 0, \dots, 0)$ where the cycle is in the $(j+1)$-th position for $0 \leq j < w$. We repeat this for each of the $(m-2)$ relators as we have $$x^j (a_i - a_{i+1}) = x^{j+w} (a_i - a_{i+1})=x^{j+2w} (a_i - a_{i+1}) = \cdots = x^{j+sw} (a_i - a_{i+1})$$
 for all $s \in \ZZ$.
 
The second set of relators may be rewritten as
\begin{align*}
&x^j. ((a_2 - a_1), - (a_m - a_1), a_m - a_{m-1},\dots, a_3 - a_2, 0, \dots, 0) \\
&~= x^j.((a_2 - a_1), - \sum_{p=2}^{m}(a_p - a_{p-1}), a_m - a_{m-1},\dots, a_3 - a_2, 0, \dots, 0)
\end{align*}
Thus we may write any cycle $x^j.(a_2 - a_1, 0, \dots, 0)$ ($j > 0)$ in terms of other cycles, except for $(a_2 - a_1, 0, \dots, 0)$. Hence we get another elimination of $n-1$ distinct generators. (We perform this reduction by noting that all $a_2-a_1$ cycles outside the first slot may be written in terms of other cycles. One should note that the cycles are not identified ``cyclically'' as before.) Combining these with the $(m-2)(n-\gcd(m,n))$ generators we eliminated earlier, we have our lemma.\end{proof}

\begin{remark}
From the proof of Lemma \ref{lem:rels} and the group presentation, it is clear that the relators in $\Tilde{R}_H$ coming from each multiple point $v \in V$ identify a disjoint set of generators. Thus each multiple point reduces the number of distinct homology classes by $(n-1) + (m_v - 2)(n-\gcd(m_v,n))$. 
\end{remark}

\begin{lemma}
\label{lem:infinity}
The homology class generated by $(h, a_{1,2}, \dots, a_{k,m_k})$ is trivial.
\end{lemma}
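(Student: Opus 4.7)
The plan is to exhibit the 1-cycle $(h, a_{1,2}, \dots, a_{k,m_k})$ as the boundary of a single 2-cell of $K$, namely the lift of the projective relator $P$. Recall that the total number of hyperplanes in $\arr$ is
\[
n = 1 + \sum_{i=1}^{k}(m_i - 1),
\]
so the word
\[
P : \quad h\cdot a_{1,2}\cdot a_{1,3}\cdots a_{1,m_1}\cdot a_{2,2}\cdots a_{k,m_k} = 1
\]
is a product of exactly $n$ meridional generators, matching the number of positions in our tuple notation.

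First I would recall the construction of \S\ref{ssec:mf-covering}: because $\phi$ sends every meridional generator to $1 \in \ZZ_n$, the image of $P$ under $\phi$ is $n \equiv 0 \pmod n$, so each of the $n$ lifts of the 2-cell attached via $P$ is a genuine 2-cell in $K$ whose boundary is a closed loop starting and ending at the same vertex $x^i v$. Next I would trace the boundary of the lift attached at the basepoint $v$: since each generator advances the vertex by one application of $x$, the boundary successively travels along the 1-cells
\[
h,\quad x\cdot a_{1,2},\quad x^{2}\cdot a_{1,3},\quad \ldots,\quad x^{n-1}\cdot a_{k,m_k},
\]
and so, read as a cellular 1-chain, it equals
\[
h + x\cdot a_{1,2} + x^{2}\cdot a_{1,3} + \cdots + x^{n-1}\cdot a_{k,m_k}.
\]
In the tuple notation of Notation \ref{not:tuple} this is precisely $(h, a_{1,2}, a_{1,3}, \dots, a_{k,m_k})$.

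Since this 1-chain is the boundary of a 2-cell of $K$, its class in $H_1(K)$ vanishes, giving the lemma. The main obstacle is simply notational bookkeeping: one must verify carefully that the positions in the tuple match the order in which the generators appear in $P$, and that the lifted boundary closes up at the same vertex it started at (which is exactly the condition $\phi(P)=0$). No further algebra is required; the conclusion is immediate once the boundary of the lifted 2-cell is identified with the listed generator.
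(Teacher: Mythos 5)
Your proof is correct and follows essentially the same route as the paper: the paper likewise observes that the lift of the projective relator $P$ becomes exactly the tuple $(h, a_{1,2}, \dots, a_{k,m_k})$ in $H_1(K)$, hence that class bounds and is trivial. Your added bookkeeping (that $P$ has exactly $n$ letters, so $\phi(P)=0$ and the lifted boundary closes up) just makes explicit what the paper leaves implicit.
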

\begin{proof}
Recall that the relator $P$ equals $h \cdot \prod_{j=2}^{m_1} a_{1,j} \cdots \prod_{j=2}^{m_k} a_{k,j}$. In the first homology group of $K$, this relator takes the form
\begin{equation*}
(h, a_{1,2}, \dots, a_{k,m_k})
\end{equation*}
Therefore $(h, a_{1,2}, \dots, a_{k,m_k})$ is trivial.
\end{proof}

We may now use the preceding lemmas to prove Theorem \ref{thm:OneHyp}.
\begin{proof}By Lemma \ref{lem:gens}, at most $1 + n |V| + n \sum_{v \in V} (m_v -2)$ homologically distinct cycles generate $H_1(K,\KK)$. Lemma \ref{lem:rels} shows that each multiple point eliminates a disjoint set of generators. Thus, combined with Lemma \ref{lem:infinity}, we have 
\begin{align*}
b_1(F, \KK) &\leq b_1(K,\KK) \\
 &\leq 1 + n |V| + n \sum_{v \in V} (m_v -2) - 1 - \sum_{v \in V} \left[ (m_v -2)(n-\gcd(m_v , n)) + (n-1)\right] \\
 &= n |V| + n \sum_{v \in V} (m_v -2) - n \sum_{v \in V} (m_v -2) \\
 & \hspace*{0.95in}  + \sum_{v \in V} \left[ (m_v -2)\gcd(m_v,n) \right] - |V|(n-1)\\
 &= |V| + \sum_{v \in V} \left[ (m_v -2)\gcd(m_v,n) \right]
\end{align*}

Note that $(n-1) +|V| =   \sum_{v \in V} m_v$, thus we have
\begin{align*}
b_1(F, \KK)  &\leq |V| + \sum_{v \in V} \left[ (m_v -2)\gcd(m_v,n) \right] + |V| + (n-1) - \sum_{v \in V} m_v \\
 &= (n-1) + \sum_{v \in V} \left[ (m_v -2)\gcd(m_v,n) \right] +2 |V| - \sum_{v \in V} m_v \\
 &= (n-1) + \sum_{v \in V} \left[ (m_v -2)\gcd(m_v,n) \right] +2 \sum_{v \in V} 1 - \sum_{v \in V} m_v \\
 &= (n-1) + \sum_{v \in V} \left[ (m_v -2)(\gcd(m_v,n) -1) \right]
\end{align*}as desired in the statement of the theorem.\end{proof}

\subsection{One higher order multiple point} $~$

Finally, we examine the case where a line contains only one higher order multiple point.
  	\begin{theorem}
  	\label{thm:one-rel-prime}
  	Let $\arr$ be a complexified real arrangement in $\CP^2$ and let $F$ be the associated Milnor fiber. If there exists a line $H \in \arr$ such that there is only one multiple point $v \in H$ satisfying $\gcd(m(v), |\arr|) \neq 1$ and $m(v) > 2$, then 
  	\begin{equation*}
  	H_1(F,\ZZ) \cong \ZZ^{|\arr | - 1}.
  	\end{equation*}
  	\end{theorem}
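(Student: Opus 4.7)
The plan is to refine the proof of Theorem \ref{thm:OneHyp} so that, under the stronger hypothesis here, the residual error term $(m_{v_0} - 2)(\gcd(m_{v_0}, n) - 1)$ coming from the unique exceptional multiple point $v_0$ can be killed by Arvola--Randell relations that Theorem \ref{thm:OneHyp} did not exploit. Once the sharp upper bound $b_1(F, \KK) \leq n - 1$ is established for every field $\KK$, the lower bound $b_1(F, \CC) \geq n - 1$ from Theorem \ref{thm:minimal-betti-mf}, together with the characteristic-independence argument used in Corollary \ref{Combo}, yields $H_1(F, \ZZ) \cong \ZZ^{n - 1}$.

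My setup is as in the proof of Theorem \ref{thm:OneHyp}: I start with the Arvola--Randell presentation $\pres = \langle h, \{a_{v, j}\} : R_H \cup P \cup R \rangle$, where $R$ collects the relations at multiple points off $H$, and I apply Lemmas \ref{lem:gens}, \ref{lem:rels}, and \ref{lem:infinity} verbatim. Under the hypothesis, each summand $(m_v - 2)(\gcd(m_v, n) - 1)$ in Theorem \ref{thm:OneHyp}'s bound vanishes except at $v_0$. The only residual generators left after the reductions at $H$ are, for each $p \in \{3, \ldots, m_{v_0}\}$, the $d - 1$ extra lifts $x^j(a_{v_0, p} - a_{v_0, p - 1}, 0, \ldots, 0)$ with $j$ ranging over non-principal cosets of $\langle d \rangle$ in $\ZZ_n$, where $d = \gcd(m_{v_0}, n)$.

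The crux is then to kill these residual orbits using the off-$H$ relations in $R$ together with the already-established identifications at the good multiple points on $H$. The guiding principle is this: each non-exceptional line $C \in \arr \setminus \{H, A_{v_0, 2}, \ldots, A_{v_0, m_{v_0}}\}$ meets $H$ at a good multiple point, so by Lemma \ref{lem:rels} all $n$ lifts of $(c - h)$ are identified in $H_1(K, \KK)$. If $A_{v_0, p}$ meets such a $C$ at a double point or at a multiple point whose multiplicity is coprime to $n$, the corresponding relation in $R$ identifies all $n$ lifts of $(a_{v_0, p} - c)$, and writing $a_{v_0, p} - h = (a_{v_0, p} - c) + (c - h)$ yields $x[a_{v_0, p} - h] = [a_{v_0, p} - h]$. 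Taking the difference of such identifications for consecutive indices $p$ and $p - 1$ collapses all $d$ orbits of $(a_{v_0, p} - a_{v_0, p - 1})$ into one and eliminates the entire $(m_{v_0} - 2)(d - 1)$ excess.

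The main obstacle is that the hypothesis only constrains points on $H$, so in principle an exceptional line $A_{v_0, p}$ could have every off-$H$ intersection point be itself a bad multiple point (multiplicity $> 2$ and $\gcd$ with $n$ greater than $1$). In this harder case, the single-step identification above must be replaced by a longer chain running through several non-exceptional intermediaries, or by a combined identification pooling partial $\gcd$-reductions at several bad off-$H$ points together with the additional constraints coming from the full set of $n$ lifts $\{x^j P\}_{j = 0}^{n - 1}$ of the projective relation. Making one of these strategies precise and combinatorially effective is where I expect the technical work to concentrate.
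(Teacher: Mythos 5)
There is a genuine gap, and it is exactly the one you flag at the end. Your plan keeps the setup of Theorem \ref{thm:OneHyp} (generators measured against $H$, all lifts of $(a-h)$ identified at the good points of $H$) and then tries to kill the residual $(m_{v_0}-2)(\gcd(m_{v_0},n)-1)$ classes at the bad point $v_0$ by finding, for each line $A_{v_0,p}$ through $v_0$, an off-$H$ intersection with some line $C$ that is a double point or has multiplicity coprime to $n$. But the hypothesis of the theorem constrains only the points lying on $H$; nothing prevents every off-$H$ point of $A_{v_0,p}$ from being a ``bad'' multiple point, and your proposed fallbacks (chains of intermediaries, pooling partial $\gcd$-reductions with the lifts of the projective relation) are left as a sketch precisely at the step on which the whole theorem turns. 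There is also a secondary issue you pass over: at an off-$H$ point the Arvola--Randell generators are conjugates of the global meridians, so the lifted relators are not literally $x^i(a_{v_0,p}-c,-(a_{v_0,p}-c),0,\dots,0)$ but carry correction terms that must be cancelled before your telescoping $a_{v_0,p}-h=(a_{v_0,p}-c)+(c-h)$ argument applies.

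The paper's proof avoids the hard case entirely by a different initial choice: it decones the arrangement with respect to a line $P_\infty$ that passes through the exceptional point $v$. Then $v$ is pushed to infinity, so every remaining multiple point on $H$ in the affine picture has multiplicity $2$ or multiplicity coprime to $n$, and the pencil computation of Section \ref{ssec:one-point-intersection} identifies all $n$ lifts of $(a_{(l,k)}-h)$ for every line transverse to $H$. The other lines through $v$ become the lines $P_j$ parallel to $H$, ordered by distance from $H$, and their extra lifts are collapsed inductively using the cyclic relation $\alpha_{m}^{\Gamma_{m}}p_j\alpha_1^{\Gamma_1}\cdots=p_j\alpha_1^{\Gamma_1}\cdots\alpha_{m}^{\Gamma_{m}}$ at each of their intersection points: since each $a_{(l,k)}$ occurs an even number of times there, the already-established identifications cancel all $a$-terms and leave $x^i(h-p_j,-(h-p_j),0,\dots,0)$. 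The crucial point is that this works \emph{regardless} of the multiplicities or $\gcd$'s of the off-$H$ points, which is exactly what your approach cannot guarantee. To repair your argument you would need, in effect, this same idea: treat the lines through $v_0$ not via $\gcd$ conditions at their other intersection points, but via the structural form of the relators at those points after the identifications along $H$ (or, equivalently, move $v_0$ to infinity by the choice of decone).
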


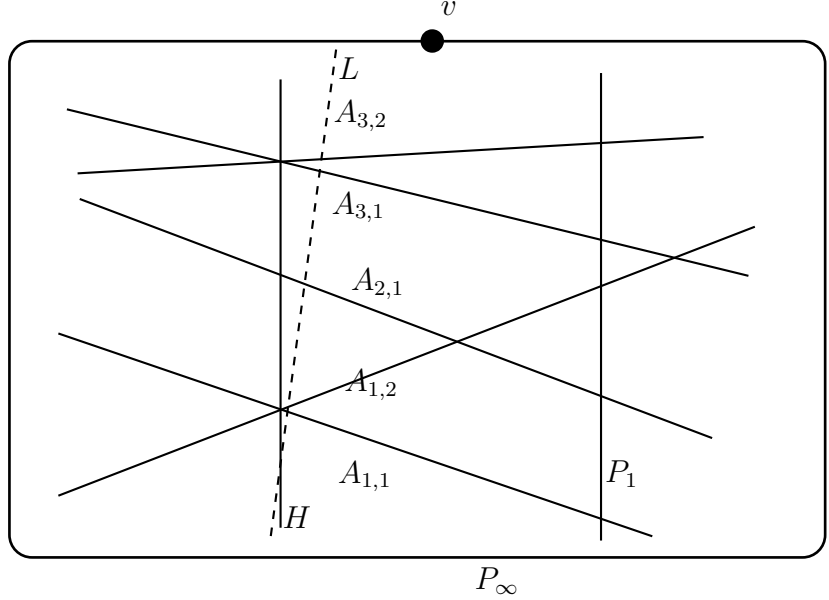
\begin{figure}[ht!]
	\begin{center}
	\begin{tikzpicture}[y=0.80pt, x=0.8pt,yscale=-1, inner sep=0pt, outer sep=0pt,scale=0.5]
\begin{scope}[shift={(0,-52.362183)}]
  \path[shift={(0,52.362183)},draw=black,line join=miter,line cap=butt,line
    width=0.800pt] (454.5686,244.4059) .. controls (454.5686,668.6700) and
    (454.5686,668.6700) .. (454.5686,668.6700);
  \path[shift={(0,52.362183)},draw=black,line join=miter,line cap=butt,line
    width=0.800pt] (757.6144,238.3450) -- (757.6144,680.7918);
  \path[shift={(0,52.362183)},draw=black,line join=miter,line cap=butt,line
    width=0.800pt] (244.4569,638.3654) -- (903.0764,383.8069);
  \path[shift={(0,52.362183)},draw=black,line join=miter,line cap=butt,line
    width=0.800pt] (244.4569,484.8222) -- (806.1017,676.7512);
  \path[shift={(0,52.362183)},draw=black,line join=miter,line cap=butt,line
    width=0.800pt] (264.6600,357.5430) -- (862.6703,583.8171);
  \path[shift={(0,52.362183)},draw=black,line join=miter,line cap=butt,line
    width=0.800pt] (262.6397,333.2993) -- (854.5891,298.9541);
  \path[shift={(0,52.362183)},draw=black,line join=miter,line cap=butt,line
    width=0.800pt] (252.5381,272.6902) -- (897.0155,430.2740);
  \path[fill=black] (456.58896,721.0321) node[above right] (text3703) {$H$     };
  \path[fill=black] (761.65497,682.64636) node[above right] (text3707) {$P_1$
    };
  \path[fill=black] (509.11688,686.68701) node[above right] (text3711) {$A_{1,1}$
    };
  \path[fill=black] (515.1778,599.81384) node[above right] (text3715) {$A_{1,2}$
    };
  \path[fill=black] (521.23871,504.85953) node[above right] (text3719) {$A_{2,1}$
    };
  \path[shift={(0,52.362183)},fill=black] (503.05597,379.76633) node[above right]
    (text3723) {$A_{3,1}$     };
  \path[shift={(0,52.362183)},fill=black] (505.07626,292.89322) node[above right]
    (text3727) {$A_{3,2}$     };
  \path[shift={(0,52.362183)},miter limit=4.00,line width=0.080pt]
    (897.0155,485.8324)arc(0.000:180.000:379.817350 and
    297.995)arc(-180.000:0.000:379.817350 and 297.995) -- cycle;
  \path[draw=black,line join=round,line cap=round,miter limit=27.00,line
    width=1.010pt,rounded corners=0.3000cm] (198.1837,260.5964) rectangle
    (969.5527,749.1226);
  \path[shift={(10.101525,54.382488)},draw=black,fill=black,line join=bevel,line
    cap=rect,miter limit=16.60,line width=2.400pt]
    (595.9900,206.0201)arc(-0.022:180.022:8.081)arc(-180.022:0.022:8.081) --
    cycle;
  \path[fill=black] (606.09155,236.15894) node[above right] (text3739) {$v$     };
  \path[shift={(0,52.362183)},fill=black] (638.41638,731.29944) node[above right]
    (text3743) {$P_\infty$     };
  \path[draw=black,dash pattern=on 3.26pt off 3.26pt,line join=miter,line
    cap=butt,miter limit=4.00,line width=0.814pt] (507.0877,268.4927) --
    (444.4760,735.1654);
  \path[fill=black] (509.11688,296.76807) node[above right] (text3749) {$L$   };
\end{scope}

\end{tikzpicture}
	\caption{An arrangement satisfying the conditions of Theorem \ref{thm:one-rel-prime}.}
	\label{fig:deconeOfCR-oneNotRelPrimePt}
\end{center}
\end{figure}

\begin{proof}
Let $P_\infty$ be any line in the arrangement containing the point $v$ except for $H$. Let $\darr$ be the decone of the arrangement with respect to the line $P_\infty$ (ie, the affine arrangement with $P_\infty$ the line at infinity). As we assumed that $\arr$ was a complexified real arrangement, we may depict the arrangement locally as in Figure \ref{fig:deconeOfCR-oneNotRelPrimePt}.

We will denote the lines intersecting $H$ by $\{A_{(l,k)} \}_{l=1,k=1}^{m,m_l}$ where the first index is with respect to each point of intersection and the second with respect to the lines in the intersection. Denote the lines parallel to $H$ by $\{P_j\}_{j=1}^{p}$. Further, we index the $P_j$ by increasing distance from $H$. (As they are parallel, the real parts of the lines have a well-defined distance between them. Therefore, we assume the distance from $H$ to $P_h$ is less than or equal to the distance from $H$ to $P_{h+1}$) . By considering a hyperplane slightly askew to $H$ (labeled $L$ in Figure \ref{fig:deconeOfCR-oneNotRelPrimePt} and denoted by a dashed line), we associate to each line a generator in the fundamental group coming from a small loop around the line oriented positively with respect to the natural complex structure and denoted by $h$, $a_{(l,k)}$ or $p_j$. Thus we construct an Arvola-Randell presentation of the form $\langle h, a_{(l,k)}, p_j : R \rangle$. 

Recall from the proof of Lemma \ref{lem:gens} that we may represent the generators of the first homology group of $K$ with respect to $h$ by
\begin{equation}
\label{eqn:gens2}
\left\lbrace
\begin{array}{l|r}
x^i . (a_{(l,k)} - h, 0, \dots, 0) & 0 \leq i \leq n-1 \\
x^i . (p_j - h, 0, \dots, 0) & 1 \leq l \leq m \\
(h, h, \dots, h) & 1 \leq k \leq m_l \\
 & 1 \leq j \leq p
\end{array}
\right\rbrace
\end{equation}
for tuples of length $n=|\arr|$. 

As the order of each multiple point along $H$ is relatively prime to $n$, we alter the relators in $H_1(K)$ arising from the multiple points along $H$ by Tietze transformations to have the form
\begin{equation}
\label{eqn:relators-aik}
\left\lbrace x^i . (a_{(l,k)} - h, -(a_{(l,k)}-h) , \dots, 0) \right\rbrace .
\end{equation}
This follows from Section \ref{ssec:one-point-intersection}.

As the lines $P_j$ are parallel to $H_0$ and we have at least one line $A_{(l,k)}$, there is a set of relations in $R$ of the form 
\begin{equation*}
[p_1, \alpha_1^{\Gamma_1}, \alpha_2^{\Gamma_2}, \dots, \alpha_{m_1}^{\Gamma_{m_1}}]
\end{equation*}
where each $\alpha_w$ is some $a_{(l,k)}$ and each $\Gamma_w$ is a word composed of some combination of $a_{(l,k)}$'s. 

Consider the set of relators in $H_1(F)$ generated by the relations
\begin{equation*}
\alpha_{m_1}^{\Gamma_{m_1}} \cdot p_1 \cdot \alpha_1^{\Gamma_1} \cdots \alpha_{m_1-1}^{\Gamma_{m_1-1}} =p_1 \cdot \alpha_1^{\Gamma_1} \cdots \alpha_{m_1}^{\Gamma_{m_1}} .
\end{equation*}
One may easily seen that the word $\alpha_{m_1}^{\Gamma_{m_1}}$ has one more positive exponent than negative exponent, thus will move the indexing one space up when the relator is lifted to $K$. As each subsequent $\alpha^\Gamma$ has the same form, the result will be that all the conjugations do not affect the starting point of each subsequent conjugated term in the relation. Thus, the relators will have the form $x^i. (h-p_1 + A_1, p_1 - h + A_2, A_3, \dots, A_{m_1}, 0, \dots, 0)$, where the $A_i$ are combinations of $a_{(j,k)}$'s. Every $a_{(i,k)}$ occurs an even number of times in the above relations (as they are involved in conjugation or on both sides of the equal sign), thus using the relators from \eqref{eqn:relators-aik} we may remove all $(a_{(i,k)} - h)$ terms from the relators. The end result is
 \begin{equation*}
x^i. (h-p_1, -(h- p_1) , 0, \dots, 0).
\end{equation*} 

Any other parallel line $P_g$ will generate relations of the form
\begin{equation*}
[p_g, \alpha_1^{\Gamma_1}, \alpha_2^{\Gamma_2}, \dots, \alpha_{m_g}^{\Gamma_{m_g}}]
\end{equation*}
where the $\alpha_i$ are single letters of the form $a_{(l,k)}$ or $p_{g'}$ where $g'<g$. The $\Gamma$ terms are words in $a_{(l,k)}$ or $p_{g'}$ where $g'<g$. By the same arguments as given above, we may conclude that we have relators of the form
 \begin{equation*}
 x^i. (h-p_g,-(h-p_g), 0, \dots, 0).
 \end{equation*}
 
 Therefore, we have a presentation for $H_1(K,\KK)$ given by 
 \begin{equation}
 \left(
 \begin{array}{l | l}
 x^i . (a_{(l,k)} - h, 0, \dots, 0) & x^i . (a_{(l,k)} - h, -(a_{(l,k)}-h) , \dots, 0)\\
 x^i . (p_j - h, 0, \dots, 0) & x^i. (h-p_j, -(h-p_j), 0, \dots, 0) \\ 
 (h, h, \dots, h) & R* 
 \end{array}
 \right)
 \end{equation}
 Therefore, the group has at most $|\arr| -1$ generators, so it follows that must be a free abelian group on $|\arr| -1$ generators as we know that $b_1(F,\KK) = b_1(K,\KK) \geq |\arr| - 1$.
\end{proof}

\section{Examples}\label{sec:example}

Cohen, Dimca and Orlik prove the following theorem in \cite{CDO-NonresonanceConditions-MR2038782}:

\begin{theorem}\label{thm:CDO-LocSystems}
 Let $\arr$ be an arrangement of $n$ projective lines in $\CP^2$, with associated Milnor fiber $F$. Then for any integer $0<k<n$ and any line $H$ in the arrangement $\arr$ we have
\begin{equation*}
b_1(F, \CC)_k \leq \sum_x (m_x -2)
\end{equation*}
where the sum is over all points $x \in H$ such that the multiplicity of $\arr$ at $x$ is $m_x > 2$ and $n$ divides $k m_x$.
\end{theorem}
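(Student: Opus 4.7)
The plan is to refine the CW-complex construction used in the proof of Theorem \ref{thm:OneHyp} so that it tracks the eigenspace decomposition of $H_1(F,\CC)$ under the geometric monodromy $g \colon F \to F$, rather than only the total dimension. Since $g$ generates the $\ZZ_n$-action making $F \to M(\darr)$ into a cyclic cover, the deck group $\ZZ_n$ acts on the cellular chain complex $C_*(K,\CC)$ of the complex $K = K(\pres,\phi)$ from Lemma \ref{lem:construction}, commuting with the boundary. This yields a $\CC[\ZZ_n]$-module structure on $H_1(K,\CC) \cong H_1(F,\CC)$, and $b_1(F,\CC)_k$ is the dimension of the $\lambda^k$-eigenspace, where $\lambda = e^{2\pi i/n}$.

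The first step is to diagonalize the tuple action from Notation \ref{not:tuple}: replace the basis $\{x^j \cdot v\}_{j=0}^{n-1}$ of each $\ZZ_n$-orbit by the eigenbasis $e_k(v) = \sum_{j=0}^{n-1} \lambda^{-jk} x^j \cdot v$. Every boundary relation arising from a multiplicity-$m$ intersection then decomposes into $n$ scalar relations, one per eigenspace. Using the Arvola-Randell presentation along $H$ as in Theorem \ref{thm:OneHyp}, I would rerun the Tietze reductions from Section \ref{ssec:one-point-intersection} inside the $k$-th eigenspace. The crucial computation is that the "short" relators of the form $x^i \cdot (\alpha,0,\ldots,0,-\alpha,0,\ldots,0)$ (with second nonzero entry determined by $\gcd(m,n)$) produce the scalar $1 - \lambda^{k m /\gcd(m,n)}$ on the $k$-th eigenspace; this vanishes precisely when $n \mid km$, and is invertible otherwise. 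In the invertible case, the $(m-2)$ pairwise differences $a_{i,p+1}-a_{i,p}$ attached to that vertex are forced to zero, contributing nothing; in the resonant case $n \mid km$, they contribute at most $m-2$ eigenvectors.

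Next I would handle the "long" relator $\Tilde{R}_{n-2}$ and the projective relator $\tilde{P}$. For $k \neq 0$ one has $\sum_{j=0}^{n-1} \lambda^{jk} = 0$, which kills the eigenvector $(h,h,\ldots,h) = e_0(h)$ (taking care of Lemma \ref{lem:infinity} automatically) and collapses the global cycling relation at each vertex to a relation already implied by the short relators. Combining over the multiple points of $H$ and the generators from lines \emph{not} passing through $H$ (which, after the Arvola-Randell collapse along $H$, project to the zero subspace of the $k$-th eigenspace for $k \neq 0$ once the local relations are imposed), the surviving generators in the $k$-th eigenspace are at most $\sum_{x \in V_H,\, m_x > 2,\, n \mid k m_x} (m_x - 2)$, which is the claimed bound.

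The main obstacle will be handling the relators $R$ coming from multiple points \emph{off} of $H$: these relations involve the generators $a_{(l,k)}$ indirectly through conjugation, and I must verify that in the $k$-th eigenspace (with $k \neq 0$) they do not produce additional surviving cycles. The expected resolution is that each such off-$H$ relation, after passing to the eigenbasis, contains a factor of $(1-\lambda^{jk})$ for some $j$ matching the conjugation length, and is either automatically satisfied or already among the short-type relators. Making this bookkeeping precise — in a way uniform across all complexified-real arrangements, not just the pencil and general-position cases treated earlier — is the technical heart of the argument and is where the perverse-sheaf proof of Cohen--Dimca--Orlik sidesteps the combinatorics by working with constructible complexes instead.
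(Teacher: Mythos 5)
First, a point of reference: the paper does not prove Theorem \ref{thm:CDO-LocSystems} at all --- it is quoted from Cohen--Dimca--Orlik \cite{CDO-NonresonanceConditions-MR2038782}, where it is obtained by identifying $b_1(F,\CC)_k$ with the first cohomology of a rank-one local system on the projective complement and applying a nonresonance/vanishing theorem along the line $H$. So your covering-space, eigenspace-by-eigenspace refinement of the argument for Theorem \ref{thm:OneHyp} is an attempt at a genuinely different, more elementary route, and the overall strategy (decompose $H_1(K,\CC)$ under the deck group $\ZZ_n$, and note that the ``short'' relators of Section \ref{ssec:one-point-intersection} act on the $\lambda^k$-eigenspace by a scalar that vanishes exactly at the resonant points) is sound in spirit.

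However, as written the proposal has concrete gaps. (1) The scalar is wrong: by Lemma \ref{lem:rels} the short relator is $(1-x^{w})$ applied to a difference cycle with $w=\gcd(m_x,n)$, so it acts on the $\lambda^k$-eigenspace by $1-\lambda^{k\gcd(m_x,n)}$, not $1-\lambda^{k m_x/\gcd(m_x,n)}$; your stated vanishing criterion ($n\mid km_x$) is correct for the former but not for the latter (try $m_x=2$, $n=4$, $k=2$). (2) More seriously, your claim that for $k\neq 0$ the long cycling relator at each vertex is ``already implied by the short relators'' is exactly backwards at the resonant points: there the short relators act by $0$ and impose nothing, and it is precisely the long relator $\Tilde{R}_{n-2}$ (whose $e_k$-component has coefficient $1-\lambda^k\neq 0$ on $e_k(a_{x,2}-h)$, cf.\ Remark \ref{rem:rel-prime-relators-reduced}) that cuts the local contribution from $m_x-1$ down to $m_x-2$; discarding it as you do only yields the weaker bound $\sum(m_x-1)$. (3) Conversely, the step you flag as the unresolved ``technical heart'' --- the relations at multiple points off $H$ --- is a non-issue: for an \emph{upper} bound you may simply drop those relations (dropping relators can only enlarge the estimate), which is exactly what the paper's proof of Theorem \ref{thm:OneHyp} does; moreover in $\CP^2$ every line of $\arr$ meets $H$, so the generating set of Lemma \ref{lem:gens} already expresses all meridians through differences attached to points of $H$, and there are no ``lines not passing through $H$'' to worry about. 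With (1) and (2) repaired and (3) recognized, the eigenspace computation along $H$ alone does give the CDO bound, but the proof as proposed does not yet establish it.
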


If one is only interested in computing the complex betti number of the Milnor fiber, then Example \ref{ex:cdo-stronger} shows that Theorem \ref{thm:CDO-LocSystems} will allow one to conclude that the betti number is minimal in cases where Theorem \ref{thm:OneHyp} does not.

\begin{example}\label{ex:cdo-stronger}Let $\arr$ be an arrangement of 12 projective lines in $\CP^2$ such that there is a line $H_3 \in \arr$ that contains multiple points of order 2 and 3 only, and contains at least one point of order 3. We also assume that $H_4 \in \arr$ is a line that contains points of order 2 and 4 only and contains at least one point of order 4. By Theorem \ref{thm:OneHyp}, $b_1(F,\KK) \leq 11 + 2m_3$ where $m_3$ is the number of multiple points of order 3 contained in $H_3$. 

However, applying Theorem \ref{thm:CDO-LocSystems} to the arrangement for $k=1,2,3,5,6,7,9,10,11$ along the line $H_3$ to conclude that $b_1(F)_k = 0$. Applying Theorem \ref{thm:CDO-LocSystems} for $j=1,2,4,5,7,8,10,11$  along $H_4$ allows us to conclude that $b_1(F)_j =0$. Therefore, $b_1(F, \CC) = 11$.
\end{example}

\begin{example} Let $\arr$ be an arrangement of lines in $\CP^2$ such that one line only contains multiple points that are relatively prime to the order of the arrangement or have order two. Then Theorem \ref{thm:CDO-LocSystems} and Theorem \ref{thm:OneHyp} both imply that the associated Milnor fiber has minimal betti number. However, by Theorem \ref{thm:OneHyp} we may also conclude that the first homology group is torsion free.\end{example}

\begin{example}\label{ex:torsion-free}Let $\arr$ be an arrangement in $\CP^2$ such that the number of lines in the arrangement is $p^d$ for some prime $p$ greater than two. Also, suppose that all lines in the arrangement have at least one multiple point of order $p$, and at least one line $H$ has only one multiple point with order divisible by $p$. In this case, Theorem \ref{thm:CDO-LocSystems} for any line will at best yield the inequality $b_1(F,\CC)_k \leq (p-2)$. However, applying Theorem \ref{thm:one-rel-prime} to the line $H$ will yield $H_1(F;\ZZ) \cong \ZZ^{p^d -1}$; hence minimal first betti number and torsion free first homology group.
\end{example}

\providecommand{\bysame}{\leavevmode\hbox to3em{\hrulefill}\thinspace}
\providecommand{\MR}{\relax\ifhmode\unskip\space\fi MR }
\providecommand{\MRhref}[2]{%
  \href{http://www.ams.org/mathscinet-getitem?mr=#1}{#2}
}
\providecommand{\href}[2]{#2}

 \makeatletter 
\providecommand\@dotsep{5}
\makeatother

\end{document}